\documentclass[11pt]{article}

\usepackage[T1]{fontenc}
\usepackage[utf8]{inputenc}
\usepackage{lmodern}
\usepackage{amsmath,amssymb,amsthm,mathtools}
\usepackage{enumitem}
\usepackage{microtype}
\usepackage[colorlinks=true,linkcolor=blue,citecolor=blue,urlcolor=blue]{hyperref}
\usepackage[margin=1in]{geometry}
\hypersetup{pdftitle={Strong Subgraph-Count Stability in C2l+1-Free Graphs},pdfauthor={Yuanpei Wang and Xiaomiao Zhao},pdfkeywords={Generalized Turan problem, Stability, Odd cycle, Non-bipartite}}
\setlist{leftmargin=2em}

\newtheorem{theorem}{Theorem}[section]
\newtheorem*{knownresult}{Theorem}
\newtheorem{lemma}[theorem]{Lemma}
\newtheorem*{lemma*}{Lemma}
\newtheorem{proposition}[theorem]{Proposition}
\newtheorem{corollary}[theorem]{Corollary}
\theoremstyle{definition}
\newtheorem{definition}[theorem]{Definition}
\theoremstyle{plain}

\theoremstyle{remark}

\theoremstyle{plain}
\newtheorem{problem}[theorem]{Problem}
\numberwithin{equation}{section}

\DeclareMathOperator{\ex}{ex}
\DeclareMathOperator{\Aut}{Aut}
\newcommand{\cG}{\mathcal G}
\newcommand{\cB}{\mathcal B}
\newcommand{\cS}{\mathcal S}
\newcommand{\cT}{\mathcal T}
\newcommand{\cR}{\mathcal R}
\newcommand{\N}{\mathcal N}

\title{Strong Subgraph-Count Stability in $C_{2\ell+1}$-Free Graphs}
\author{Yuanpei Wang$^{1,*}$\quad Xiaomiao Zhao$^{2,\dagger}$\\[0.5ex]
\small $^1$ Department of Mathematics, Shanghai University, Shanghai 200444, P.R. China\\
\small $^2$ Department of Mathematical Sciences, Tsinghua University, Beijing 100084, P.R. China}
\date{}

\begin{document}
\maketitle
\begingroup
\renewcommand{\thefootnote}{\fnsymbol{footnote}}
\footnotetext[1]{boyuan@shu.edu.cn}
\footnotetext[2]{\emph{Corresponding author}: zxm23@mails.tsinghua.edu.cn}
\endgroup

\begin{abstract}
Starting from the stability theorem of Erd\H{o}s and Simonovits, stability problems for graphs forbidding a fixed subgraph have been studied in terms of edge numbers, spectral radii and subgraph counts. Let $\N(F,G)$ denote the number of unlabeled copies of $F$ in $G$. It is known that, for every fixed path $P_t$ and even cycle $C_{2a}$, the maximum number of copies in an $n$-vertex $C_{2\ell+1}$-free graph is attained by the bipartite Tur\'an graph $T_{n,2}$.

In this paper we obtain strong structural stability for $C_{2\ell+1}$-free graphs in terms of copies of paths and even cycles. For fixed $\ell\ge2$ and $3\le r\le2\ell-1$, we show that if an $n$-vertex $C_{2\ell+1}$-free graph contains at least as many copies of $P_t$ or $C_{2a}$ as the corresponding suspended extremal construction, then it has the corresponding suspension structure. This gives exact high-chromatic extremal theorems for paths and even cycles.

We also prove a counting theorem for nearly complete bipartite graphs. It shows that, for every fixed matching-admissible connected bipartite graph $F$, both imbalance between the two parts and missing cross-edges decrease the number of copies of $F$ by a term with a specified main coefficient. This theorem is independent of the forbidden odd cycle and converts subgraph-count assumptions into the edge bounds needed for the structural theorem.
\end{abstract}

\noindent\textbf{Keywords.} Generalized Tur\'an problem, Stability, Odd cycle, Non-bipartite.\\
\textbf{2020 Mathematics Subject Classification.} 05C35, 05C38, 05C75.

\section{Introduction}

For graphs $G$ and $F$, say that $G$ is \emph{$F$-free} if $G$ contains no subgraph isomorphic to $F$. For graphs $H$ and $G$, write $\N(H,G)$ for the number of unlabeled copies of $H$ in $G$. Define
\[
        \ex(n,H,F)=\max\{\N(H,G): G \text{ is an $n$-vertex $F$-free graph}\}.
\]

When $H=K_2$, we have $\N(K_2,G)=e(G)$, and therefore $\ex(n,K_2,F)$ is the \emph{ordinary Tur\'an number}. Mantel's theorem \cite{Mantel1907} gives $\ex(n,K_2,K_3)=\lfloor n^2/4\rfloor$, with extremal graph $T_{n,2}$. Tur\'an's theorem \cite{Turan1941} extends this to cliques: the unique $n$-vertex $K_{r+1}$-free graph with the maximum number of edges is the complete balanced $r$-partite graph $T_{n,r}$. More generally, the Erd\H{o}s--Stone--Simonovits theorem \cite{ErdosStone1946,ErdosSimonovits1966} determines the asymptotic value of $\ex(n,K_2,F)$ for every non-bipartite graph $F$ in terms of $\chi(F)$. The Erd\H{o}s--Simonovits stability theorem \cite{Simonovits1968} gives the corresponding structural statement for graphs with nearly extremal number of edges.

The case $H\ne K_2$ asks for subgraph counts rather than only edge counts. This \emph{generalized Tur\'an problem} goes back at least to Erd\H{o}s's theorem on the number of cliques in clique-free graphs \cite{Erdos1962}, and was systematically developed by Alon and Shikhelman \cite{AlonShikhelman2016}. For a recent survey, see Gerbner and Palmer \cite{GerbnerPalmerSurvey2026}.

A central question is when the corresponding Tur\'an graph remains extremal for $\ex(n,H,F)$. The following theorem gives such a result for the paths and even cycles considered in this paper.

Here and throughout the paper, $P_t$ denotes the path with $t$ vertices, and $C_t$ denotes the cycle with $t$ vertices.

\begin{knownresult}[Gerbner \cite{Gerbner2021}; Hei and Hou \cite{HeiHou2024}]
Fix integers $\ell\ge2$, $t\ge2$ and $a\ge2$. Then
\[
        \ex(n,P_t,C_{2\ell+1})=\N(P_t,T_{n,2}),
        \qquad
        \ex(n,C_{2a},C_{2\ell+1})=\N(C_{2a},T_{n,2})
\]
for all sufficiently large $n$. For $a=2$, equality in the second formula holds only for $G=T_{n,2}$.
\end{knownresult}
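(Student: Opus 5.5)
The plan is to prove this known result (Gerbner; Hei--Hou) by a stability argument. First we get asymptotic extremality and closeness to $T_{n,2}$; then a cleaning argument gives exactness for large $n$.

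\emph{Step 1 (asymptotics and removal).} Let $G$ be an $n$-vertex $C_{2\ell+1}$-free graph with $\N(H,G)\ge \N(H,T_{n,2})$, where $H\in\{P_t,C_{2a}\}$. By the removal lemma, $G$ can be made $\{C_3,C_5,\dots,C_{2\ell+1}\}$-free, and then (after further $o(n^2)$ deletions) $C_{2k+1}$-free for all $k$ up to some large bound, by deleting $o(n^2)$ edges. This destroys only $o(n^{t})$, respectively $o(n^{2a})$, copies of $H$. For a graph $G'$ with no short odd cycles, a graph-limit or Lagrangian argument shows that the homomorphism density of a bipartite $H$ is maximized, up to $o(1)$, by a balanced complete bipartite graph. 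We argue this via a blow-up and symmetrization. Among weighted graphs with no short odd cycles, symmetrization (Zykov-type, copying vertices) preserves odd-girth constraints and does not decrease $\N(H,\cdot)$ up to lower-order terms. Hence the extremal limit is a complete bipartite blow-up, and optimizing the part sizes gives balance. Stability comes from the same argument: if $\N(H,G)\ge(1-o(1))\N(H,T_{n,2})$, then $G$ is $o(n^2)$-close to $T_{n,2}$.

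\emph{Step 2 (cleaning).} Take a max-cut partition $V=A\cup B$. Then $e(A)+e(B)=o(n^2)$ and $|A|,|B|=n/2+o(n)$. Let $L$ be the set of vertices of degree at most $(1/2-\varepsilon)n$. Deleting the vertices of $L$ one at a time, each deletion loses at most about $\deg(v)\cdot c\,n^{t-2}$ copies of $H$, compared with the average loss per vertex, which is about $(1/2)c\,n^{t-2}\cdot n$. A counting comparison therefore shows $|L|$ is bounded, and we may reduce to the case where all degrees are at least $(1/2-\varepsilon)n$ (the usual progressive-induction trick).

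\emph{Step 3 (no bad edges).} With minimum degree at least $(1/2-\varepsilon)n$ and every vertex having few neighbors on its own side, suppose $G$ contains an edge $uv$ inside $A$. Both $u$ and $v$ have $(1/2-2\varepsilon)n$ neighbors in $B$. Since the bipartite graph between $A$ and $B$ is almost complete, we can route a path of odd length $2\ell-1$ from $N_B(u)$ to $N_B(v)$ through the nearly complete bipartite graph. Together with the edge $uv$ this forms a cycle of length $2\ell+1$, a contradiction (here $\ell\ge2$ supplies room for the path). So $G$ is bipartite. Among bipartite graphs, $\N(H,\cdot)$ is maximized by the complete balanced bipartite graph, since adding edges only increases the count and balancing strictly increases it. This gives equality for large $n$. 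For $a=2$, uniqueness follows because every missing cross edge or imbalance strictly decreases $\N(C_4,\cdot)$, and the Step 2 reduction is strict.

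\emph{Main obstacle.} The hardest step is Step 1: showing that complete bipartite graphs maximize the count asymptotically among graphs of large odd girth. This is subtle because blow-ups of $C_{2k+1}$ for large $k$ are nearly bipartite yet dense. I would first try symmetrization, which works because vertex cloning preserves odd girth. Alternatively, I would use the fact that dense graphs with no short odd cycles are $o(n^2)$-close to bipartite.
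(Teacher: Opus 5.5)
\emph{The paper does not prove this result.} It is quoted from Gerbner and from Hei and Hou, so your argument has to stand on its own. Steps~2 and~3 are the standard cleaning and are fine in outline. There is one parity slip: a path in the bipartite graph from $b_1\in N_B(u)$ to $b_2\in N_B(v)$ has even length, so you need one with $2\ell-2$ edges, making $u\,b_1\cdots b_2\,v\,u$ a $C_{2\ell+1}$. The genuine gap is Step~1, which carries all the content.

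\emph{The deletion claim and its fallback are false.} You claim that $o(n^2)$ further deletions make $G$ free of $C_{2k+1}$ for all $k$ up to some large bound. Take a balanced blow-up of $C_{2\ell+3}$. It is $C_{2\ell+1}$-free, since it maps homomorphically onto $C_{2\ell+3}$. It contains $\Theta(n^{2\ell+3})$ copies of $C_{2\ell+3}$, while each edge lies in only $O(n^{2\ell+1})$ of them, so $\Omega(n^2)$ deletions are needed. The removal lemma applies only to $C_3,\dots,C_{2\ell+1}$, whose densities are $o(1)$ by supersaturation. For the same reason your fallback, that dense graphs with no short odd cycles are $o(n^2)$-close to bipartite, is unavailable. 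It holds only when the excluded lengths go up to some $L(\varepsilon)\to\infty$, and the same blow-up refutes it for any fixed bound.

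\emph{The symmetrization is asserted, not proved.} Cloning does preserve odd girth. But for $H=P_t$ or $C_{2a}$ the count is not linear along a cloning move, because copies can pass through both the cloned vertex and the vertex it replaces. Equivalently, the density of $H$ in a weighted blow-up is not multilinear in the weights. So the Zykov/Motzkin--Straus weight-shifting that works for cliques gives no monotonicity here. The sentence ``hence the extremal limit is a complete bipartite blow-up'' is exactly what must be shown, and it depends on $H$. Indeed, the general claim that a bipartite $H$ is maximized by a balanced complete bipartite graph is false: for the star $K_{1,4}$, an unbalanced $K_{m,n-m}$ with $m\approx n/5$ has asymptotically more copies than $T_{n,2}$.

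\emph{How to repair Step~1.} The missing ingredient is an inequality tying the count of $H$ to $e(G)$, and the spectral argument of Lemma~\ref{lem:spectral-init} supplies it:
$2\N(P_t,G)\le n\lambda^{t-1}$ and $4a\N(C_{2a},G)\le 2e(G)\lambda^{2a-2}$, with $\lambda\le\sqrt{e(G)}+\ell$ by Lemma~\ref{lem:zls}.
\begin{enumerate}
\item Together with $e(G)\le n^2/4$, this gives the asymptotic upper bound.
\item If $\N(H,G)\ge\N(H,T_{n,2})$, it forces $e(G)\ge t_{n,2}-O(n)$. Erd\H{o}s--Simonovits stability then gives the closeness to $T_{n,2}$ that Steps~2 and~3 need.
\end{enumerate}
With Lemma~\ref{lem:zlp-core} you can even skip the cleaning. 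Let $\tau$ be the number of vertices outside the bipartite set $W$. Only $O(1)$ edges meet these vertices, so $\N(H,G)\le\N(H,T_{n-\tau,2})+O(n^{v(H)-2})$. On the other hand, $\N(H,T_{n,2})-\N(H,T_{n-\tau,2})=\Theta(\tau n^{v(H)-1})$. Hence $\tau=0$ and $G$ is bipartite, and both the exact value and uniqueness follow.
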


F\"uredi and Gunderson \cite{FurediGunderson2015} proved that, for $\ell\ge2$ and $n\ge4\ell-2$, every $n$-vertex $C_{2\ell+1}$-free graph has at most $\lfloor n^2/4\rfloor$ edges, and for $n\ge4\ell$ the unique extremal graph is $T_{n,2}$. It is then natural to ask for the largest possible number of edges under the additional condition $\chi(G)\ge r$.

Ren, Wang, Wang and Yang \cite{RenWangWangYang2024} proved that, for $3\le r\le2\ell$ and all sufficiently large $n$, every $n$-vertex $C_{2\ell+1}$-free graph $G$ with $\chi(G)\ge r$ satisfies $e(G)\le \lfloor(n-r+1)^2/4\rfloor+\binom r2$, with equality only for the graph obtained from $T_{n-r+1,2}$ by suspending a clique $K_r$. Yan and Peng \cite{YanPengStructural2024} strengthened the corresponding stability theorem to a suspension description, and Zou, Li and Peng \cite{ZouLiPeng2025} further refined this structure theorem and established a strong spectral analogue. Yan and Peng \cite{YanPengC42026} proved a counting analogue for $C_4$: sufficiently many copies of $C_4$ imply the same type of structural conclusion for graphs forbidding an odd cycle. In the special case $a=2$, the high-chromatic part of our even-cycle result recovers the suspended-clique extremal family from this $C_4$ theory; the present paper considers all fixed even cycles and also separates the unrestricted case $r<2a$.

The generalized Tur\'an theorem for paths and even cycles states that the bipartite Tur\'an graph is extremal. It does not address the corresponding non-bipartite or high-chromatic extremal problems. This paper proves such strong stability results for paths and even cycles.

The main counting tool is a theorem for nearly complete bipartite graphs. It shows that, for every fixed matching-admissible connected bipartite graph $F$, imbalance between the two parts and missing cross-edges decrease the number of copies of $F$ by a term specified in the theorem, up to an error term of the same order. We use this theorem only for $F=P_t$ and $F=C_{2 a}$. It converts a lower bound on subgraph counts into the edge threshold required by the strong stability theorem of Zou, Li and Peng~\cite{ZouLiPeng2025}.

We now state the main results. We first introduce the notation appearing in the statements.

\begin{definition}\label{def:suspension-family}
Let $B$ and $H$ be graphs, and let $x\in V(B)$. \emph{Suspending $H$ on $B$ at $x$} means taking the disjoint union of $B$ and $H$, identifying $x$ with one vertex of $H$, and adding no further edges; the vertex $x$ is called the \emph{suspension vertex}. For integers $n$ and $r\ge2$, let $\cG_{n,r}$ be the family of all $n$-vertex graphs which can be obtained from a bipartite graph $B$ by suspending graphs $H_1,\ldots,H_s$ on $B$ so that the sets $V(H_i)\setminus V(B)$ are pairwise disjoint and
\[
        \sum_{i=1}^s |V(H_i)\setminus V(B)|\le r-2.
\]
The graph $B$ is called a \emph{bipartite core} of the resulting graph, and the vertices in $\bigcup_i(V(H_i)\setminus V(B))$ are called \emph{outside vertices}.
\end{definition}

Let $\cT^*(r,n)$ be the family of graphs obtained from the bipartite Tur\'an graph $T_{n-r+1,2}$ by suspending a clique $K_r$. For integers $t\ge4$ and $r\ge3$, let $\cS_{t,r}(n)$ be the subfamily of $\cT^*(r,n)$ consisting of the graphs with the maximum number of copies of $P_t$.

\begin{proposition}\label{prop:Tstar-path-choice}
The family $\cS_{t,r}(n)$ is as follows. If $n-r+1$ is even, then $\cS_{t,r}(n)=\cT^*(r,n)$. If $n-r+1$ is odd, let $T_+(r,n)$ and $T_-(r,n)$ be the graphs obtained by suspending $K_r$ at a vertex in the larger and smaller part of $T_{n-r+1,2}$, respectively. If $t$ is odd, then $\cS_{t,r}(n)=\{T_-(r,n)\}$. If $t=2q$ is even, then
\[
        \cS_{t,r}(n)=
        \begin{cases}
        \{T_+(r,n)\}, & r\le q+1,\ (t,r)\ne(4,3),\\
        \{T_+(r,n),T_-(r,n)\}, & (t,r)=(4,3),\\
        \{T_-(r,n)\}, & r\ge q+2.
        \end{cases}
\]
\end{proposition}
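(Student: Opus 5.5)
The plan is to compare the graphs in $\cT^*(r,n)$ directly by counting copies of $P_t$. When $n-r+1$ is even, $T_{n-r+1,2}$ is vertex-transitive, so all suspension points give isomorphic graphs and $\cS_{t,r}(n)=\cT^*(r,n)$. Assume $n-r+1=2m+1$, so the core is $K_{m,m+1}$, and let $x$ be the suspension vertex. Every copy of $P_t$ lies in one of three places:
\begin{itemize}
\item entirely inside the core;
\item entirely inside $K_r$;
\item through $x$, with $x$ an interior vertex, splitting into a subpath of $K_r$ ending at $x$ and a subpath of the core starting at $x$.
\end{itemize}
The first two counts do not depend on the choice of $x$. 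So only the third count matters. It equals
\[
\sum_{j=1}^{\min(r-1,t-2)} (r-1)_j\, N_{t-j-1}(x),
\]
where $(r-1)_j$ counts ordered $j$-vertex paths in $K_r-x$ attached to $x$, and $N_s(x)$ is the number of paths in $K_{m,m+1}$ starting at $x$ with $s$ further vertices. There is no overcounting, because $x$ separates the two sides.

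Next I would compute $N_s$ exactly by alternating the choices. If $x$ is in the larger part, the successive numbers of choices are $m,m,m-1,m-1,\dots$. If $x$ is in the smaller part, they are $m+1,m-1,m,m-2,m-1,m-3,\dots$. Write $A_p=\prod_{i<p}(m-i)^2$ and $B_p=\prod_{i<p}\bigl((m-i)^2-1\bigr)$. Then
\[
N^+_{2p}=A_p,\qquad N^-_{2p}=B_p,\qquad N^+_{2p+1}=A_p(m-p),\qquad N^-_{2p+1}=B_p(m+1-p).
\]
Hence $N^+_{2p}-N^-_{2p}=p\,m^{2p-2}+O(m^{2p-3})>0$, so the larger part wins for an even number of steps. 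Also $N^-_{2p+1}-N^+_{2p+1}=m^{2p}+O(m^{2p-1})>0$, so the smaller part wins for an odd number of steps. The dominant contributions to $\Delta=\#P_t(T_+)-\#P_t(T_-)$ come from $j=1$ and $j=2$.

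For $t$ odd, the $j=1$ term has $s=t-2$ odd. It contributes $-(r-1)m^{t-3}$, which dominates every other term, which are $O(m^{t-4})$. So $T_-$ is the unique maximizer.

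For $t=2q$, the $j=1$ term gives $+(r-1)(q-1)m^{2q-4}$ and the $j=2$ term gives $-(r-1)(r-2)m^{2q-4}$, while terms with $j\ge3$ are $O(m^{2q-6})$. So $T_+$ wins when $q-1>r-2$ and $T_-$ wins when $q-1<r-2$, that is, when $r\ge q+2$. This settles every case except the boundary $r=q+1$.

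The main obstacle is this boundary case $r=q+1$, where the leading terms cancel. There I will expand $A_p-B_p$ and $B_p(m+1-p)-A_p(m-p)$ to second order, and include the $j=3$ contribution $(r-1)(r-2)(r-3)\bigl(N^+_{2q-4}-N^-_{2q-4}\bigr)$. The goal is to show the resulting coefficient of $m^{2q-5}$, or of the first non-vanishing power, is positive when $q\ge3$. For $(t,r)=(4,3)$ I will check directly that the difference is identically $2\cdot1-2\cdot1=0$, giving both graphs.

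Since the statement carries no largeness hypothesis on $n$, I will try to write $\Delta$ as an explicit polynomial in $m$ and verify its sign for all $m\ge1$. If that fails for small $m$, I will record that the proposition is intended for $n$ sufficiently large, as in the surrounding results.
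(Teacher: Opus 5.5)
Your proposal follows the paper's proof essentially step for step. The cut-vertex decomposition, the $x$-dependent sum $\sum_j (r-1)_jN_{t-j-1}(x)$, the leading terms and the exact check all agree with it. The sum is the paper's $\sum_j(d)_jR^Z_{t-j}(a,b)$, with $N_s=R_{s+1}$. The leading terms are $-(r-1)m^{t-3}$ for odd $t$ and $(r-1)(q+1-r)m^{2q-4}$ for $t=2q$, and the exact check at $(t,r)=(4,3)$ is $2-2=0$. Like the paper, your argument is asymptotic in $n$.

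The one step you leave open is the boundary case $r=q+1$ with $q\ge3$, and your plan for it would not settle it as written. Your product formulas give exactly
\[
N^+_{2p}-N^-_{2p}=\frac pm\,(m)_p(m)_{p-1},\qquad N^+_{2p+1}-N^-_{2p+1}=-\frac1m\,(m)_p(m)_{p+1}.
\]
So with $r-1=q$, the $j=1$ contribution $q(q-1)\frac1m(m)_{q-1}(m)_{q-2}$ and the $j=2$ contribution $-q(q-1)\frac1m(m)_{q-2}(m)_{q-1}$ cancel identically. A second-order expansion therefore returns $0$, and there is no $m^{2q-5}$ term. The decisive order is $m^{2q-6}$. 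At that order the $j=4$ term $-(q)_4\frac1m(m)_{q-3}(m)_{q-2}$ is as large as the $j=3$ term $(q)_3\frac{q-2}{m}(m)_{q-2}(m)_{q-3}$. Including only $j=3$, as you propose, yields the coefficient $q(q-1)(q-2)^2$ instead of the correct $q(q-1)(q-2)$; it has the right sign, but the accounting is wrong.

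The clean fix is to pair $j=2i-1$ with $j=2i$. This gives, for $r=q+1$,
\[
\Delta=\frac1m\sum_{i\ge2}(q)_{2i-1}\,(i-1)\,(m)_{q-i}(m)_{q-i-1}.
\]
This is $0$ for $q=2$ and positive for $q\ge3$ and every $m\ge q$. Its leading term is $q(q-1)(q-2)m^{2q-6}$, which is exactly the value the paper asserts without derivation.
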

For cycles, no such choice is needed: all members of $\cT^*(r,n)$ have the same number of copies of $C_{2a}$, because the bipartite core and the suspended clique meet in a single cut vertex, and hence each cycle is contained either in the bipartite core or in the suspended clique.

For $r\ge3$, every graph in $\cG_{n,r}$ is $(r-1)$-colourable: colour a bipartite core with two colours, and colour the outside vertices of each suspended graph with distinct colours different from the colour of its suspension vertex; there are enough such colours since the total number of outside vertices is at most $r-2$.
Consequently, $\cT^*(r,n)\subseteq \cG_{n,r+1}\setminus\cG_{n,r}$, since each member of $\cT^*(r,n)$ is obtained with $r-1$ outside vertices and contains a clique $K_r$.

For integers $t\ge4$, $a\ge2$ and $r\ge3$, define
\[
\begin{aligned}
        b^{\mathrm P}_{t,r}(n)
        &=\max\{\N(P_t,H):H\in\cG_{n,r+1}\setminus\cG_{n,r}\},\\
        b^{\mathrm P,\chi}_{t,r}(n)
        &=\max\{\N(P_t,H):H\in\cG_{n,r+1}\setminus\cG_{n,r},\ \chi(H)\ge r\},\\
        b^{\circ}_{a,r}(n)
        &=\max\{\N(C_{2a},H):H\in\cG_{n,r+1}\setminus\cG_{n,r}\},\\
        b^{\circ,\chi}_{a,r}(n)
        &=\max\{\N(C_{2a},H):H\in\cG_{n,r+1}\setminus\cG_{n,r},\ \chi(H)\ge r\}.
\end{aligned}
\]
Let $\cB^{\mathrm P}_{t,r}(n)$, $\cB^{\mathrm P,\chi}_{t,r}(n)$, $\cB^{\circ}_{a,r}(n)$ and $\cB^{\circ,\chi}_{a,r}(n)$ denote the corresponding sets of extremal graphs, respectively.

We now state the path theorem. The extra condition $r\le t-1$ is needed only for odd $t$ without the chromatic condition.

\begin{theorem}\label{thm:path-main}
Fix integers $\ell\ge2$, $t\ge4$ and $3\le r\le2\ell-1$. If $t$ is odd, assume also that $r\le t-1$. There exists $n_0=n_0(\ell,t,r)$ such that the following holds for every $n\ge n_0$. If $G$ is an $n$-vertex $C_{2\ell+1}$-free graph and
\[
        \N(P_t,G)\ge b^{\mathrm P}_{t,r}(n),
\]
then either $G\in\cG_{n,r}$ or $G\in\cS_{t,r}(n)$.
\end{theorem}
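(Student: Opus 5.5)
We reduce the statement to the strong structural stability theorem of Zou, Li and Peng~\cite{ZouLiPeng2025}, with the counting theorem for nearly complete bipartite graphs turning the hypothesis on $\N(P_t,G)$ into the edge bound that theorem needs. We assume the Zou--Li--Peng theorem in the following form: for $3\le r\le 2\ell-1$ and $n$ large, if $G$ is $C_{2\ell+1}$-free and $e(G)\ge \lfloor (n-r+1)^2/4\rfloor+\binom r2 - c_0 n$ for a suitable small explicit linear deficit, then $G\in\cG_{n,r+1}$. Moreover, $G$ then has a bipartite core $B$ with $|V(G)\setminus V(B)|\le r-1$, and $B$ is nearly complete and nearly balanced.

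\emph{Step 1: $G$ is close to $T_{n,2}$.} Since $b^{\mathrm P}_{t,r}(n)\ge \N(P_t,T)$ for any $T\in\cT^*(r,n)$, we have $\N(P_t,G)\ge \N(P_t,T_{n,2})-O(n^{t-1})$. The known extremal theorem together with the standard removal/stability argument for $C_{2\ell+1}$-free graphs then gives that $G$ is $o(n^2)$-close to $T_{n,2}$. After deleting $O(1)$ low-degree vertices, we obtain an induced subgraph that is bipartite and nearly complete.

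\emph{Step 2: convert counts into edges.} Write $G$ as a bipartite part $B'$ with parts of sizes $n_1,n_2$ and $m$ missing cross-edges, plus a set $W$ of $O(1)$ exceptional vertices. The counting theorem for nearly complete bipartite graphs, applied with $F=P_t$ (which is matching-admissible), gives
\[
\N(P_t,B')\le \N(P_t,T_{|B'|,2})-c_1(n_1-n_2)^2n^{t-3}-c_2 m\, n^{t-3}+\text{lower order}.
\]
Each vertex of $W$ contributes at most $O(n^{t-2})$ paths, and that bound is attained only if it has linear degree into one side. Comparing with the explicit value of $\N(P_t,T)$, which loses exactly $(r-1)$ vertices' worth of $\Theta(n^{t-2})$ terms, we get $m=O(n)$ and $|n_1-n_2|=O(1)$. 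Consequently $e(G)$ exceeds the Zou--Li--Peng threshold, so $G\in\cG_{n,r+1}$.

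\emph{Step 3: exact comparison inside $\cG_{n,r+1}$.} If $G\in\cG_{n,r}$ we are done. Otherwise $G\in\cG_{n,r+1}\setminus\cG_{n,r}$, so by the definition of $b^{\mathrm P}_{t,r}(n)$ we get $\N(P_t,G)=b^{\mathrm P}_{t,r}(n)$, and $G$ lies in $\cB^{\mathrm P}_{t,r}(n)$. It remains to show $\cB^{\mathrm P}_{t,r}(n)=\cS_{t,r}(n)$, i.e.\ that among graphs with $r-1$ outside vertices that are not in $\cG_{n,r}$, the count is maximized exactly by $\cT^*(r,n)$ with the optimal attachment side (Proposition~\ref{prop:Tstar-path-choice}).

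To prove this, expand $\N(P_t,H)$ by the number of path vertices lying outside the core. The leading term $\N(P_t,B)$ forces $B=T_{n-r+1,2}$, since any deficit costs $\Theta(n^{t-3})$ per missing edge or imbalance unit, while the outside vertices contribute at most $O(n^{t-2})$. The next term counts paths entering the outside graph through suspension vertices. This term is $\Theta(n^{t-2})$ times a combinatorial quantity of the suspended pieces, roughly the number of paths in the pieces ending at the suspension vertex, weighted by the number of free endpoints. We maximize it subject to not being in $\cG_{n,r}$. Not being in $\cG_{n,r}$ means the $r-1$ outside vertices cannot be split into smaller pieces, so they must form one piece $H$ on $r$ vertices including the suspension vertex. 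Among such $H$, the complete graph $K_r$ uniquely maximizes the number of paths ending at a fixed vertex. The odd-$t$ assumption $r\le t-1$ enters here: for larger $r$ relative to $t$, attaching pieces at two suspension vertices of a path through the core, or using outside vertices spread more widely, could beat $K_r$ when no chromatic condition pins down a clique.

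\emph{Main obstacle.} The hardest part is Step 3, specifically the exact second-order comparison. We must prove that $K_r$ suspended at a single vertex beats every other configuration in $\cG_{n,r+1}\setminus\cG_{n,r}$: several suspension vertices, non-clique pieces, and paths that leave the core, traverse a piece and return (only possible through a common vertex, hence absent). We must also resolve the parity and side choice, which is deferred to Proposition~\ref{prop:Tstar-path-choice}. We would handle this by writing the $n^{t-2}$ coefficient explicitly as a function of the piece structure and showing that it is a strictly increasing function of the number of paths in the piece rooted at the suspension vertex. Ties appear only in the listed cases.
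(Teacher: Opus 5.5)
Your outline follows the paper's architecture: count hypothesis, then near-bipartite structure, then the counting theorem, then Theorem~\ref{thm:zlp}, then identification of $\cB^{\mathrm P}_{t,r}(n)$. However, Step~2 does not deliver the edge bound that Theorem~\ref{thm:zlp} needs.

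From $m=O(n)$, $|W|=O(1)$ and bounded imbalance you only get $t_{n,2}-e(G)=O(n)$. The theorem requires $e(G)\ge\lfloor (n-r)^2/4\rfloor+\binom{r+1}{2}$, i.e.\ $t_{n,2}-e(G)\le \frac r2 n-O(1)$, which is a statement about the exact linear coefficient. The missing step is a sharp first-order comparison. If the threshold fails, Corollary~\ref{cor:global-defects}(i) (the counting theorem with $\kappa_{P_t}=\lfloor t/2\rfloor/2^{t-2}$) gives $\N(P_t,G)\le \N(P_t,T_{n,2})-\frac{\lfloor t/2\rfloor r}{2^{t-1}}n^{t-1}+O(n^{t-2})$. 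Lemma~\ref{lem:path-extremal-asymp} gives $b^{\mathrm P}_{t,r}(n)=\N(P_t,T_{n,2})-\frac{t(r-1)}{2^t}n^{t-1}+O(n^{t-2})$. These are incompatible exactly when $t$ is even or $r<t$, and this is the only place where $r\le t-1$ is used.

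You attribute that hypothesis to Step~3 instead, but Proposition~\ref{prop:path-extremal} gives $\cB^{\mathrm P}_{t,r}(n)=\cS_{t,r}(n)$ for all $t\ge4$ and $r\ge3$. For the same reason a ``small'' deficit $c_0n$ in your version of Zou--Li--Peng is not enough. For odd $t$ the count hypothesis only yields $t_{n,2}-e(G)\le \frac{t(r-1)}{2(t-1)}n+O(1)$, so you need $c_0>\frac{r-1}{2(t-1)}$, which is close to $\frac12$ when $r=t-1$. This is available only because the true threshold lies about $n/2$ edges below $e(T)$ for $T\in\cT^*(r,n)$. Tracking exact coefficients in your split into $B'$ and $W$ would also work; that is what Lemma~\ref{lem:odd-two-param} does. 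Your scales are also off by one power of $n$: a missing cross-edge lies in $\Theta(n^{t-2})$ copies of $P_t$, and $b^{\mathrm P}_{t,r}(n)$ is $\Theta(n^{t-1})$, not $(r-1)\cdot\Theta(n^{t-2})$, below $\N(P_t,T_{n,2})$.

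There are further problems in Steps~1 and~3.
\begin{itemize}
\item Theorem~\ref{thm:local-counting} assumes $t_{s,2}-e(H)\le C_0s$, so it cannot be used to derive $m=O(n)$. Also, $o(n^2)$-closeness to $T_{n,2}$ does not give an exceptional set of size $O(1)$. The paper first obtains $e(G)\ge t_{n,2}-An$ from the spectral bound (Lemma~\ref{lem:spectral-init}) and only then applies Lemma~\ref{lem:zlp-core}.
\item A vertex of linear degree lies in $\Theta(n^{t-1})$ paths, not $O(n^{t-2})$. Vertices outside $W$ contribute $O(n^{t-2})$ precisely because their degree into $W$ is bounded.
\item In Step~3, the claim that $G\notin\cG_{n,r}$ forces all outside vertices into one piece is false. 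Two triangles suspended at distinct vertices of $T_{n-4,2}$ give a graph in $\cG_{n,6}\setminus\cG_{n,5}$, since each triangle needs two outside vertices. The paper excludes several pieces by a counting loss of order $n^{t-3}$.
\item A complete core cannot come from your magnitude comparison, because a single missing edge and the whole outside contribution are both of order $n^{t-2}$. The paper uses monotonicity instead: replacing the core by $K_{a,b}$ stays in $\cG_{n,r+1}\setminus\cG_{n,r}$, and every added edge lies in a new copy of $P_t$.
\end{itemize}
You could simply cite Proposition~\ref{prop:path-extremal} for Step~3.
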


\begin{corollary}\label{cor:path-chromatic}
Fix integers $\ell\ge2$, $t\ge4$ and $3\le r\le2\ell-1$. For all sufficiently large $n$,
\[
        \max_{\substack{|V(G)|=n,\ C_{2\ell+1}\nsubseteq G\\ \chi(G)\ge r}} \N(P_t,G)=\N(P_t,S)
\]
for every $S\in\cS_{t,r}(n)$. Moreover, the extremal graphs are precisely the members of $\cS_{t,r}(n)$.
\end{corollary}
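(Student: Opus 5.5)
The corollary follows from Theorem~\ref{thm:path-main} once the two thresholds are compared, so the plan splits into a lower bound, an upper bound, and a uniqueness statement.

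\textbf{Lower bound.} Every $S\in\cS_{t,r}(n)$ is $C_{2\ell+1}$-free. Its bipartite core contains no odd cycles. A cycle of $S$ cannot use the cut vertex to pass between the core and the clique, so every odd cycle lies inside $K_r$ and has length at most $r\le 2\ell-1<2\ell+1$. Also $\chi(S)\ge r$, since $S$ contains $K_r$. Hence the left-hand maximum is at least $\N(P_t,S)$.

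\textbf{Upper bound.} Let $G$ be an $n$-vertex $C_{2\ell+1}$-free graph with $\chi(G)\ge r$ and $\N(P_t,G)\ge\N(P_t,S)$.
\begin{itemize}
\item \emph{Step 1: $G\notin\cG_{n,r}$.} Every member of $\cG_{n,r}$ is $(r-1)$-colourable, as observed after Definition~\ref{def:suspension-family}, while $\chi(G)\ge r$.
\item \emph{Step 2: $G\in\cS_{t,r}(n)$, at least in the main case.} We want to apply Theorem~\ref{thm:path-main}. For this we need $\N(P_t,S)\ge b^{\mathrm P}_{t,r}(n)$. Since $\cT^*(r,n)\subseteq\cG_{n,r+1}\setminus\cG_{n,r}$, we always have $\N(P_t,S)\le b^{\mathrm P}_{t,r}(n)$, so the real task is equality. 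That is, among all graphs in $\cG_{n,r+1}\setminus\cG_{n,r}$, the members of $\cS_{t,r}(n)$ should maximise the number of copies of $P_t$. Granting this, $\N(P_t,G)\ge b^{\mathrm P}_{t,r}(n)$, and Theorem~\ref{thm:path-main} together with Step~1 gives $G\in\cS_{t,r}(n)$. It follows that $\N(P_t,G)=\N(P_t,S)$, which proves both the value of the maximum and the characterisation of the extremal graphs.
\end{itemize}

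\textbf{Main obstacle.} The hard part is the comparison $b^{\mathrm P}_{t,r}(n)$ versus $\N(P_t,S)$, and it is not automatic. A graph in $\cG_{n,r+1}\setminus\cG_{n,r}$ has $r-1$ outside vertices, but it need not contain $K_r$. It could instead carry several small suspended pieces, for example sparse gadgets at many high-degree core vertices. Such pieces may generate more copies of $P_t$ that start on outside vertices than a single clique does.

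The fix is to use the chromatic version $b^{\mathrm P,\chi}_{t,r}(n)$ instead. I expect Theorem~\ref{thm:path-main}, or its proof, to hold verbatim with the threshold $b^{\mathrm P,\chi}_{t,r}(n)$ in place of $b^{\mathrm P}_{t,r}(n)$. This should work with no restriction $r\le t-1$: the statement's remark says that restriction is needed only without the chromatic condition, which is why the corollary drops it. It then remains to show $b^{\mathrm P,\chi}_{t,r}(n)=\N(P_t,S)$ with extremal set $\cS_{t,r}(n)$, via the following argument.
\begin{itemize}
\item A graph $H\in\cG_{n,r+1}$ with $\chi(H)\ge r$ must use all $r-1$ outside vertices inside one suspended graph whose chromatic number is $r$. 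Otherwise the suspended pieces could be coloured with fewer colours and $H$ would be $(r-1)$-colourable. Since that suspended graph has exactly $r$ vertices and chromatic number $r$, it is $K_r$.
\item Its core $B$ is bipartite on $n-r+1$ vertices. Copies of $P_t$ inside the core are maximised by $T_{n-r+1,2}$, with a deficit of order $n^{t-2}$ per missing edge or unit of imbalance. This is precisely what the counting theorem for nearly complete bipartite graphs supplies.
\item The paths through the clique contribute only $O(n^{t-2})$ and are sensitive to the position of the suspension vertex. This is handled by Proposition~\ref{prop:Tstar-path-choice}, which identifies the best position as $\cS_{t,r}(n)$.
\end{itemize}
Combining these shows that the maximum of $\N(P_t,H)$ over such $H$ is attained exactly on $\cS_{t,r}(n)$, which completes the argument.
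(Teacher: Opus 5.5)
Your argument has a genuine gap when $t$ is odd and $r\ge t$. This case occurs whenever $\ell\ge3$, and it is exactly what the corollary adds beyond Theorem~\ref{thm:path-main}.

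\textbf{Why the threshold swap does nothing.} You expect the proof of Theorem~\ref{thm:path-main} to go through verbatim with $b^{\mathrm P,\chi}_{t,r}(n)$ and without $r\le t-1$. But by Proposition~\ref{prop:path-extremal} the two thresholds are equal. Even without that, Lemma~\ref{lem:path-extremal-asymp} gives both the expansion $\N(P_t,T_{n,2})-\frac{t(r-1)}{2^t}n^{t-1}+O(n^{t-2})$. Moreover, that proof uses no information about $\chi(G)$ until its last line. A verbatim transfer would therefore answer Problem~\ref{prob:odd-path-no-chromatic}, which the paper leaves open.

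\textbf{Where it breaks.} The failing step is the derivation of the edge bound \eqref{eq:path-edge-threshold}. If that bound fails, Corollary~\ref{cor:global-defects}(i) only gives a loss of $\frac{(t-1)r}{2^t}n^{t-1}$. Since $(t-1)r\le t(r-1)$ exactly when $r\ge t$, there is no contradiction, and Theorem~\ref{thm:zlp} cannot be invoked. The introductory remark means the chromatic hypothesis must be used actively, not that the old proof transfers.

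\textbf{Your ``main obstacle'' and detour.} Your worry that several small suspended pieces could beat a single clique is unfounded: Proposition~\ref{prop:path-extremal} shows $b^{\mathrm P}_{t,r}(n)=\N(P_t,S)$. Your detour through $\cB^{\mathrm P,\chi}_{t,r}(n)$, using that $\chi\ge r$ forces a single suspended $K_r$, is still a valid and simpler finish in the cases Theorem~\ref{thm:path-main} covers.

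\textbf{The missing idea: separate two kinds of defect.}
\begin{enumerate}
\item Take $W$ from Lemma~\ref{lem:zlp-core}, and set $\tau=n-|W|$ and $\rho=t_{n-\tau,2}-e(G[W])$.
\item Since $G[W]$ is bipartite, $\chi(G)\le\tau+2$. So $\chi(G)\ge r$ forces $\tau\ge r-2$. This is where the chromatic hypothesis enters.
\item Use the two-parameter bound of Lemma~\ref{lem:odd-two-param}. It charges each vertex outside $W$ at $\frac{t}{2^t}n^{t-1}$, that is $\frac{t}{2^{t-1}}n^{t-2}$ per missing edge. Edges missing inside the core cost only $\frac{t-1}{2^{t-1}}n^{t-2}$ each.
\item If \eqref{eq:path-edge-threshold} fails, then $t_{n,2}-e(G)=\frac{\tau}{2}n+\rho+O(1)\ge\frac r2 n-O(1)$. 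Two cases follow.
\begin{itemize}
\item If $\tau\ge r$, the loss is at least $\frac{tr}{2^t}n^{t-1}$.
\item Otherwise $\rho\ge\frac{r-\tau}{2}n-O(1)$. The loss is then at least $\frac{(t-1)r+\tau}{2^t}n^{t-1}\ge\frac{tr-2}{2^t}n^{t-1}>\frac{t(r-1)}{2^t}n^{t-1}$.
\end{itemize}
\end{enumerate}
Either case contradicts the lower bound, so \eqref{eq:path-edge-threshold} holds. Theorem~\ref{thm:zlp}, the exclusion of the suspended $K_{r+1}$, and $\chi(G)\ge r$ then give $G\in\cB^{\mathrm P,\chi}_{t,r}(n)=\cS_{t,r}(n)$.
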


The case of $C_4$ has already been studied by Yan and Peng \cite{YanPengC42026}, who proved that a sufficiently large number of copies of $C_4$ in a $C_{2\ell+1}$-free graph implies strong structural stability. The next theorem extends this counting stability framework from $C_4$ to every fixed even cycle $C_{2a}$.

For the even-cycle result with $r<2a$, let $\cR_r(n)$ be the set of graphs $R\in\cG_{n,r+1}\setminus\cG_{n,r}$ which have a bipartite core isomorphic to $T_{n-r+1,2}$. Equivalently, $R$ is obtained from $T_{n-r+1,2}$ by suspending graphs with $r-1$ outside vertices in total. Since $r<2a$, every copy of $C_{2a}$ in such a graph is contained in the bipartite core.

\begin{theorem}\label{thm:cycle-main}
Fix integers $\ell\ge2$, $a\ge2$ and $3\le r\le2\ell-1$. There exists $n_0=n_0(\ell,a,r)$ such that the following holds for every $n\ge n_0$. If $G$ is an $n$-vertex $C_{2\ell+1}$-free graph and
\[
        \N(C_{2a},G)\ge b^{\circ}_{a,r}(n),
\]
then either $G\in\cG_{n,r}$, or $G\in\cR_r(n)$ when $r<2a$, and $G\in\cT^*(r,n)$ when $r\ge2a$.
\end{theorem}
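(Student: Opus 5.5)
The plan has three stages. First, compute $b^{\circ}_{a,r}(n)$ and identify its extremal graphs. Second, use the counting theorem for nearly complete bipartite graphs (announced in the introduction) to turn the hypothesis $\N(C_{2a},G)\ge b^{\circ}_{a,r}(n)$ into an edge lower bound. Third, invoke the strong stability theorem of Zou, Li and Peng~\cite{ZouLiPeng2025} to put $G$ into a suspension structure, and finish by exact counting inside that structure.

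\textbf{Stage 1: the value of $b^{\circ}_{a,r}(n)$.} Let $H\in\cG_{n,r+1}\setminus\cG_{n,r}$ have bipartite core $B$ on $n-r+1$ vertices. Every copy of $C_{2a}$ either lies in $B$ or uses outside vertices. The number of copies using outside vertices is $O(n^{2a-2})$: such a copy meets $B$ in at most the suspension vertices, and since each suspended graph meets $B$ at one vertex, a cycle cannot pass through an outside vertex and return to $B$ elsewhere. Hence
\[
  b^{\circ}_{a,r}(n)=\N(C_{2a},T_{n-r+1,2})+c_{a,r}(n),
\]
where $c_{a,r}(n)=\binom{r}{2a}\frac{(2a)!}{2\cdot 2a}$ if $r\ge 2a$ (the cycles inside a suspended $K_r$) and $c_{a,r}(n)=0$ if $r<2a$. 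The second case holds because cycles through outside vertices need at least $2a-1$ outside vertices plus the suspension vertex, and there are only $r-1<2a-1$ outside vertices. By the known extremality of $T_{n,2}$ among bipartite graphs (the cited generalized Tur\'an result applied to $n-r+1$ vertices), the maximum is attained exactly by taking $B=T_{n-r+1,2}$ and, when $r\ge2a$, suspending a single $K_r$, which maximizes the cycles on $r$ outside-plus-one vertices. This is where the dichotomy between $\cR_r(n)$ and $\cT^*(r,n)$ arises.

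\textbf{Stage 2: an edge bound.} Take $G$ as in the statement. Since $\N(C_{2a},G)\ge \N(C_{2a},T_{n,2})-O(n^{2a-1})$, the removal lemma together with the stability for $C_{2\ell+1}$-free graphs shows that $G$ is $o(n^2)$-close to $T_{n,2}$. Then apply the counting theorem for nearly complete bipartite graphs with $F=C_{2a}$, which is matching-admissible and connected. It says that if the bipartite core has parts $n/2\pm d$ and $m$ missing cross edges, then $\N(C_{2a},\cdot)$ falls short of $\N(C_{2a},T_{n,2})$ by $\Theta(d^2n^{2a-2}+m n^{2a-2})$ with a specified main coefficient. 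Vertices of low degree are removed first; by the same argument, each one costs about $n^{2a-1}$ cycles. Comparing with $b^{\circ}_{a,r}(n)$, which loses roughly $(r-1)\cdot\frac{\partial}{\partial n}\N(C_{2a},T_{n,2})\approx (r-1)c\,n^{2a-1}$, yields
\[
  e(G)\ge \Big\lfloor \tfrac{(n-r+1)^2}{4}\Big\rfloor+\binom r2-O(1),
\]
or more precisely the exact threshold in \cite{ZouLiPeng2025}. Matching this precisely is the main obstacle. Here I would carefully compare the main coefficients: removing one vertex versus creating imbalance or missing edges. The coefficient control in the counting theorem is exactly what is needed to show that any configuration losing more than $r-1$ "vertex-units" falls below $b^{\circ}_{a,r}(n)$. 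Edges among outside vertices contribute only $O(n^{2a-2})$ cycles and cannot compensate.

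\textbf{Stage 3: structure and exact count.} With the edge bound in hand, \cite{ZouLiPeng2025} gives $G\in\cG_{n,r}$, or $G\in\cG_{n,r+1}$ with a suspension structure. In the latter case $G\in\cG_{n,r+1}\setminus\cG_{n,r}$, so $\N(C_{2a},G)\le b^{\circ}_{a,r}(n)$. Equality then forces $G$ to be extremal, and by Stage 1 the extremal graphs are $\cR_r(n)$ if $r<2a$ and $\cT^*(r,n)$ if $r\ge2a$. The uniqueness part uses the strict extremality of $T_{n-r+1,2}$ for $C_{2a}$ among bipartite graphs on $n-r+1$ vertices. This follows from the counting theorem, since any imbalance or missing edge costs $\Omega(n^{2a-2})>0$. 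Finally, for $r\ge 2a$, the strict count of $C_{2a}$ on at most $r$ vertices with a cut vertex is maximized only by $K_r$.
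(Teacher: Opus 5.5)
Your architecture matches the paper's: Stage~1 is Proposition~\ref{prop:cycle-extremal}, followed by the edge threshold of Theorem~\ref{thm:zlp}, its application, and identification of the extremal graphs. The gap is in Stage~2, which carries all the content. It is not established, and the route you sketch does not reach the hypotheses of Theorem~\ref{thm:local-counting}.

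\textbf{Initialization.} The removal lemma plus Erd\H{o}s--Simonovits stability only gives that $G$ is $o(n^2)$-close to $T_{n,2}$. Theorem~\ref{thm:local-counting} needs a bipartite $H$ with $t_{s,2}-e(H)\le C_0s$.

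\textbf{Deleted vertices.} Your accounting ``each deleted vertex costs one unit $2^{-2a}n^{2a-1}$'' also needs the deleted vertices to lie in only $O(n^{2a-2})$ copies of $C_{2a}$. A deleted vertex with $\delta n$ neighbours on one side of the core still lies in about $(2\delta)^2\,2^{-2a}n^{2a-1}$ copies. For such vertices the one-unit claim is false and the comparison has to be redone.

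\textbf{How the paper does it.} Lemma~\ref{lem:spectral-init} uses $4a\N(C_{2a},G)\le\operatorname{tr}A_G^{2a}\le 2e(G)\lambda^{2a-2}$ together with $\lambda\le\sqrt{e(G)}+\ell$ (Lemma~\ref{lem:zls}) to get $e(G)\ge t_{n,2}-An$ directly. Lemma~\ref{lem:zlp-core} then gives a bipartite $G[W]$ with $|V(G)\setminus W|=O(1)$ and only $O(1)$ edges leaving $W$. Vertices with more than boundedly many neighbours in $W$ stay in the core, where their missing cross-edges count towards the edge defect. Proposition~\ref{prop:lifting} then gives the one-parameter bound of Corollary~\ref{cor:global-defects}(ii): $\N(C_{2a},G)\le\N(C_{2a},T_{n,2})-2^{1-2a}(t_{n,2}-e(G))n^{2a-2}+O(n^{2a-2})$.

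Once this bound is available, the step you call the main obstacle is not delicate. If $e(G)<\lfloor(n-r)^2/4\rfloor+\binom{r+1}{2}$, then $t_{n,2}-e(G)\ge\frac r2 n-O(1)$. The loss is then at least $\frac{r}{2^{2a}}n^{2a-1}$, whereas $b^{\circ}_{a,r}(n)$ loses only $\frac{r-1}{2^{2a}}n^{2a-1}$ (Lemma~\ref{lem:cycle-extremal-asymp}). The key is the identity $\kappa_{C_{2a}}\cdot\frac n2=2^{-2a}n$: for $C_{2a}$, deleting a vertex costs the same as deleting $n/2$ cross-edges. This leaves a whole vertex-unit, about $n/2$ edges, of slack. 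Your own target $e(G)\ge t_{n,2}-\frac{r-1}{2}n-O(1)$ already exceeds the threshold.

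Two further points need fixing.
\begin{enumerate}
\item Theorem~\ref{thm:zlp} has an exceptional outcome, $T_{n-r,2}$ with a suspended $K_{r+1}$, which is not in $\cG_{n,r+1}$. Your Stage~3 omits it. It must be excluded, for instance because it has $\N(C_{2a},T_{n-r,2})+O(1)=\N(C_{2a},T_{n,2})-\frac{r}{2^{2a}}n^{2a-1}+O(n^{2a-2})<b^{\circ}_{a,r}(n)$ copies.
\item Strict extremality of $T_{n-r+1,2}$ does not follow from Theorem~\ref{thm:local-counting}. Its error term $O(s^{2a-2})$ has the same order as the loss from a single missing edge. Argue directly instead: with $x,y$ linear, a missing cross-edge of $K_{x,y}$ lies in $\Theta(n^{2a-2})$ copies of $C_{2a}$, and $(x)_a(y)_a$ with $x+y$ fixed is strictly maximized at $|x-y|\le1$.
\end{enumerate}
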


\begin{corollary}\label{cor:cycle-chromatic}
Fix integers $\ell\ge2$, $a\ge2$ and $3\le r\le2\ell-1$. For all sufficiently large $n$,
\[
        \max_{\substack{|V(G)|=n,\ C_{2\ell+1}\nsubseteq G\\ \chi(G)\ge r}} \N(C_{2a},G)=\N(C_{2a},T)
\]
for every $T\in \cT^*(r,n)$. Moreover, the extremal graphs are precisely the members of $\cT^*(r,n)$.
\end{corollary}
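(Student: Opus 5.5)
We derive Corollary~\ref{cor:cycle-chromatic} from Theorem~\ref{thm:cycle-main}, using two elementary comparisons. First, every $T\in\cT^*(r,n)$ is $C_{2\ell+1}$-free and has $\chi(T)=r$. Indeed, $T$ contains $K_r$, and it is $r$-colourable by colouring the core with two colours and the clique compatibly with its suspension vertex. Moreover, any cycle of $T$ lies in the bipartite core or in $K_r$, and $K_r$ has no $C_{2\ell+1}$ since $r\le2\ell-1<2\ell+1$. Second, all members of $\cT^*(r,n)$ have the same number of copies of $C_{2a}$, by the cut-vertex remark after Proposition~\ref{prop:Tstar-path-choice}. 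So it suffices to show two things: $\N(C_{2a},T)=b^{\circ,\chi}_{a,r}(n)$, and any extremal $G$ with $\chi(G)\ge r$ lies in $\cT^*(r,n)$.

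Let $G$ be an $n$-vertex $C_{2\ell+1}$-free graph with $\chi(G)\ge r$ and $\N(C_{2a},G)\ge\N(C_{2a},T)$. We split into two cases according to the relation between $r$ and $2a$.

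\emph{Case $r\ge2a$.} Here I would show that $\N(C_{2a},T)=b^{\circ}_{a,r}(n)$, i.e.\ that $T$ maximizes the count over all of $\cG_{n,r+1}\setminus\cG_{n,r}$. A member $H$ of that family has a bipartite core $B$ on $n-s$ vertices, with $s\le r-1$ outside vertices. Since $H\notin\cG_{n,r}$, exactly $s=r-1$. The cycles of $H$ through outside vertices lie inside the suspended pieces together with their suspension vertices. So
\[
\N(C_{2a},H)\le \N(C_{2a},T_{n-r+1,2})+\N(C_{2a},\text{pieces}),
\]
and the pieces have at most $r-1$ outside vertices in total. The piece count is maximized by a single $K_r$; splitting into several smaller cliques gives fewer cycles, since $\binom{m}{2a}$ is superadditive in the number of vertices. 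The core count is maximized by the balanced $T_{n-r+1,2}$, by the edge/balance monotonicity of $C_{2a}$-counts in bipartite graphs. Hence $b^\circ_{a,r}(n)=\N(C_{2a},T)$. Theorem~\ref{thm:cycle-main} then gives $G\in\cG_{n,r}\cup\cT^*(r,n)$. But $\cG_{n,r}$ consists of $(r-1)$-colourable graphs, contradicting $\chi(G)\ge r$. Thus $G\in\cT^*(r,n)$.

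\emph{Case $r<2a$.} Now $\N(C_{2a},T)=\N(C_{2a},T_{n-r+1,2})=b^\circ_{a,r}(n)$. This holds because every graph in $\cG_{n,r+1}\setminus\cG_{n,r}$ has a core on $n-r+1$ vertices, and its pieces have at most $r<2a$ vertices each, so they carry no $C_{2a}$. Applying the theorem, $G\in\cR_r(n)$, so $G$ is $T_{n-r+1,2}$ with pieces suspended using $r-1$ outside vertices, and $\chi(G)\ge r$. Any piece with $m$ outside vertices has chromatic number at most $m+1$. So $\chi(G)\ge r$ forces a single piece with $r-1$ outside vertices whose chromatic number is $r$ on $r$ vertices, namely $K_r$. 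Hence $G\in\cT^*(r,n)$.

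In both cases the maximum equals $\N(C_{2a},T)$ and the extremal graphs are exactly $\cT^*(r,n)$. The main obstacle is the case $r\ge2a$ identification of $b^\circ_{a,r}$: one must check that no distribution of the $r-1$ outside vertices among several pieces, and no unbalanced core, beats $T_{n-r+1,2}$ with one $K_r$. My plan there is to compare counts directly, using the monotonicity of $\N(C_{2a},\cdot)$ under balancing bipartite graphs and under merging cliques.
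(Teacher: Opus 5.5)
Your proposal is correct and follows essentially the paper's route: apply Theorem~\ref{thm:cycle-main}, rule out $\cG_{n,r}$ by $(r-1)$-colourability, and identify the extremal family using the content of Proposition~\ref{prop:cycle-extremal} (balanced core, merging suspended cliques, and the chromatic argument forcing a single $K_r$), which you re-derive inline instead of citing. One small correction to the merging step: plain superadditivity of $m\mapsto\binom{m}{2a}$ in the number of vertices does not give the bound, because each piece also contains its suspension vertex, so the piece sizes $d_i+1$ sum to more than $d+1$. What you need is superadditivity of $d\mapsto\binom{d+1}{2a}$ in the number of \emph{outside} vertices; this holds, for example by the paper's trick of identifying all suspension vertices into a single vertex.
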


Write $t_{s,2}=e(T_{s,2})$. 
A connected bipartite graph $F$ is called \emph{matching-admissible} if its colour classes $A,B$ may be labelled so that
\begin{equation}
        |A|=k+\nu,\qquad |B|=k,\qquad \nu\in\{0,1\},
        \label{eq:admissible-sizes}
\end{equation}
for some $k\ge1$, and $F$ has a matching of size $k$ saturating $B$. Let $v(F)=2k+\nu$, and let $\Aut(F)$ denote the automorphism group of $F$. Define
\begin{equation}
        \kappa_F=
        \begin{cases}
        \dfrac{2k}{|\Aut(F)|\,2^{2k-2}}, & \nu=0,\\[1.1ex]
        \dfrac{k}{|\Aut(F)|\,2^{2k-2}}, & \nu=1.
        \end{cases}
        \label{eq:kappa-F}
\end{equation}

\begin{theorem}\label{thm:local-counting}
Let $F$ be a fixed matching-admissible connected bipartite graph, and let $C_0>0$. If $H$ is a bipartite graph with bipartition $X\cup Y$, where $|X|=x$, $|Y|=y$, $s=x+y$, and $t_{s,2}-e(H)\le C_0s$, then
\[
        \N(F,H)\le \N(F,T_{s,2})-
        \kappa_F\bigl(t_{s,2}-e(H)\bigr)s^{v(F)-2}
        +O_{F,C_0}(s^{v(F)-2}).
\]
\end{theorem}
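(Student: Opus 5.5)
Write $D=t_{s,2}-e(H)$ for the deficiency. It splits into an imbalance part and a missing-edge part:
\[
D=\bigl(t_{s,2}-xy\bigr)+m,\qquad m=xy-e(H)\ge0,
\]
where $m$ is the number of non-edges between $X$ and $Y$. Since $D\le C_0s$, both parts are $O(s)$. In particular $|x-y|=O(\sqrt s)$, because $t_{s,2}-xy=\lfloor (x-y)^2/4\rfloor$ up to parity. I will compare $\N(F,K_{x,y})$ with $\N(F,T_{s,2})$ and then compare $\N(F,H)$ with $\N(F,K_{x,y})$. In each step I want to show that the loss is at least $\kappa_F$ times the corresponding part of $D$, times $s^{v(F)-2}$, up to $O(s^{v(F)-2})$.

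\emph{Missing-edge step.} I use inclusion--exclusion over copies of $F$ in $K_{x,y}$ that contain a missing pair:
\[
\N(F,H)\le \N(F,K_{x,y})-\sum_{e}N_e+\sum_{\{e,f\}}N_{e,f},
\]
where $N_e$ is the number of copies using the non-edge $e$, and $N_{e,f}$ the number using both $e$ and $f$. The pair term needs care. For disjoint $e,f$ we have $N_{e,f}=O(s^{v(F)-4})$, and there are at most $m^2=O(s^2)$ such pairs, so their total is $O(s^{v(F)-2})$. For $e,f$ sharing a vertex, $N_{e,f}=O(s^{v(F)-3})$. The number of such pairs is $\sum_v d_{\bar H}(v)^2$, which can be as large as $m^2$ when the missing edges form a star, so this term is not automatically small.

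I plan to handle this by a cruder bound in the concentrated case. If a vertex $v$ is missing $\varepsilon s$ edges, then deleting $v$ and counting directly gives a loss of order $s^{v(F)-1}\cdot\varepsilon$, which dominates. So I split vertices according to whether their non-degree is at most $\delta s$. Heavy vertices number $O(1/\delta)$ and each yields a loss proportional to its non-degree times $s^{v(F)-2}$, with a larger coefficient. Light vertices give a pair error of at most $\delta s\cdot m\cdot s^{v(F)-3}$. Choosing $\delta$ small is fine here because we only need the main coefficient $\kappa_F$. Even so, this balancing is the main obstacle: it is what makes the error genuinely $O(s^{v(F)-2})$ rather than $o(s^{v(F)-1})$.

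\emph{Computing $N_e$.} For a cross pair $e=uv$ with $u\in X$ and $v\in Y$, I count labelled embeddings of $F$ into $K_{x,y}$ that map some edge of $F$ onto $uv$. Because $F$ is connected bipartite, the classes $A,B$ go to $X,Y$ or to $Y,X$. When $\nu=0$ both orientations contribute equally; when $\nu=1$ the sizes $k+1,k$ give the same leading order but with distinct roles. Then
\[
N_e=\frac{2e(F)\,(s/2)^{v(F)-2}}{|\Aut(F)|}\,(1+o(1)).
\]
Here the factor $2$ appears for $\nu=0$ (two orientations), and the unordered edge is placed in one of $e(F)$ ways. This is where matching-admissibility must enter to produce exactly $\kappa_F$. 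It suggests that the intended $\kappa_F$ may be a lower bound using only the $k$ matching edges, not $e(F)$. Since the theorem is stated with a $\le$, a smaller coefficient suffices: a copy containing a non-edge in its matching is certainly destroyed. I would therefore lower-bound the destroyed copies by counting only copies in which $e$ is the image of one of the $k$ matching edges, over both orientations when $\nu=0$, giving $2k/(|\Aut(F)|2^{v(F)-2})$ and $k/(\cdots)$ respectively. The overcounting among different matching edges is absorbed in the pair error.

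\emph{Imbalance step.} Write $x=s/2+d$ and $y=s/2-d$. Expand
\[
\N(F,K_{x,y})=\frac{1}{|\Aut(F)|}\Bigl(\sum_{\text{orientations}}(x)_{|A|}(y)_{|B|}\Bigr),
\]
where $(\cdot)_j$ is the falling factorial. As a function of $d$, the first-order term vanishes for $\nu=0$ by symmetry. The second-order coefficient is $-c\,d^2 s^{v(F)-2}$, and I would check that $c/4$ is at least $\kappa_F$ times $(d^2-\text{parity})$. For $\nu=1$ and odd $s$ the linear term in $d$ must be matched against $T_{s,2}$'s own half-integer offset. Higher-order terms are $O(d^3s^{v(F)-3})=O(s^{v(F)-3/2})$. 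That is too big, so instead I would use the exact identity that the $j$-th derivative in $d$ is $O(s^{v(F)-j})$, together with $d^4s^{v(F)-4}=O(s^{v(F)-2})$. The cubic term vanishes by symmetry in the $\nu=0$ case, and in the $\nu=1$ case I would handle it by directly checking the cubic coefficient. Adding the two steps gives the stated bound.
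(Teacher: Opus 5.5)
Your set-up matches the paper's. You use the split $t_{s,2}-e(H)=(t_{s,2}-xy)+m$, and in the end you count only labelled embeddings in which the missing pair is the image of an edge of the fixed matching $M$.

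\textbf{The gap.} The proposal fails at the step you flag as the main obstacle: the heavy/light split cannot give an $O(s^{v(F)-2})$ error.
\begin{itemize}
\item The light contribution is bounded by $\delta s\cdot m\cdot s^{v(F)-3}=\delta m s^{v(F)-2}$. Since $m$ may be of order $C_0s$, this is $\Theta(\delta s^{v(F)-1})$, the same order as the main term $\kappa_F m s^{v(F)-2}$. For fixed $\delta$ it only yields the coefficient $\kappa_F-O(\delta)$.
\item Once you restrict to matching edges there is no slack to absorb this, because the surplus factor $e(F)/k$ of the full count is gone.
\item Changing the threshold does not help. A constant threshold leaves $\Theta(s)$ heavy vertices. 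Intermediate thresholds such as $\sqrt s$ still leave an error of order $s^{v(F)-3/2}$.
\item The heavy-vertex claim is also unsupported. Take $\nu=0$ (e.g.\ $F=C_{2a}$ or $P_{2q}$), and let a vertex of $X$ lose all $y$ of its edges. The loss is $\N(F,K_{x,y})-\N(F,K_{x-1,y})$, which to first order is exactly $\kappa_F\,y\,s^{v(F)-2}$. So there is no larger coefficient to spare. Heavy stars would need an exact first-order count, which you do not supply, and you do not treat heavy--light overlaps either.
\end{itemize}

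\textbf{The missing observation.} It makes the split unnecessary. Let $\mathcal U_e$ be the set of embeddings into $K_{x,y}$ sending some edge of $M$ onto the missing pair $e$. Take two distinct missing pairs $e,f$ with a common endpoint. Under one injective map they can never both be images of matching edges, since the edges of $M$ are pairwise vertex-disjoint. Hence $\mathcal U_e\cap\mathcal U_f=\emptyset$ for such pairs, and the star configuration contributes nothing to the second Bonferroni term. Only vertex-disjoint pairs remain, giving $O(m^2s^{v(F)-4})=O(s^{v(F)-2})$. This is exactly where matching-admissibility is used.

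\textbf{Two smaller repairs.}
\begin{itemize}
\item Your $N_e=(\cdots)(1+o(1))$ must have relative error $O(1/s)$, because it is multiplied by $m=\Theta(s)$. For $\nu=1$, each orientation alone has relative error of order $|x-y|/s$, possibly $\Theta(s^{-1/2})$. You must sum the two orientations, e.g.\ via $(x-1)_k(y-1)_{k-1}+(y-1)_k(x-1)_{k-1}=(s-2k)(x-1)_{k-1}(y-1)_{k-1}$.
\item In the imbalance step, $\N(F,K_{x,y})$ is symmetric under $x\leftrightarrow y$ also for $\nu=1$, since both orientations are summed. So all odd powers of $d$ vanish, and no linear or cubic check is needed. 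Writing the count as a polynomial $P_s(z)$ in $z=xy=s^2/4-d^2$, as the paper does, makes this automatic. Then $P_s'(t_{s,2})=\kappa_F s^{v(F)-2}+O(s^{v(F)-3})$, and the remainder $(t_{s,2}-xy)^2\max|P_s''|=O(s^{v(F)-2})$.
\end{itemize}
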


All asymptotic notation is with respect to $n\to\infty$; the hidden constants may depend on fixed parameters. For $W\subseteq V(G)$, write $G[W]$ for the induced subgraph on $W$, and let $d_W(v)$ be the number of neighbours of $v$ in $W$.

The rest of the paper is organized as follows. Section~\ref{sec:prelim} records the extremal reductions and known stability results used later. Section~\ref{sec:stability} proves Theorem~\ref{thm:local-counting}. Section~\ref{sec:counting-bounds} gives the counting bounds used in the main proofs. Sections~\ref{sec:proofs-paths} and~\ref{sec:proofs-cycles} prove the path and even-cycle results, respectively. Section~\ref{sec:odd-remark} contains concluding remarks.

\section{Preliminaries}\label{sec:prelim}

We use the following theorem of Zou, Li and Peng \cite{ZouLiPeng2025}.

\begin{theorem}[Zou, Li and Peng \cite{ZouLiPeng2025}]\label{thm:zlp}
Let $\ell\ge2$, $2\le r\le2\ell-1$ and $n\ge100\ell$. If $G$ is an $n$-vertex $C_{2\ell+1}$-free graph and
\[
        e(G)\ge \left\lfloor\frac{(n-r)^2}{4}\right\rfloor+\binom{r+1}{2},
\]
then $G\in\cG_{n,r+1}$, unless $G$ is obtained from $T_{n-r,2}$ by suspending a copy of $K_{r+1}$.
\end{theorem}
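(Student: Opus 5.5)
The plan is the usual stability-and-cleaning argument, with the edge bound used at the end to count the vertices that lie outside the bipartite part. First, the hypothesis gives $e(G)\ge n^2/4-O(n)$. Since $G$ is $C_{2\ell+1}$-free, the Erd\H{o}s--Simonovits stability theorem (in the form available for odd cycles) shows that $G$ is $o(n^2)$-close to $T_{n,2}$. Take a maximum cut $V(G)=X\cup Y$. Then $|X|,|Y|=n/2+o(n)$, and only $o(n^2)$ edges lie inside the parts or are missing between them.

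Next, let $W$ be the set of vertices with at least $\varepsilon n$ neighbours in their own part, or fewer than $(1/2-\varepsilon)n$ neighbours overall. A standard deletion argument should show that $|W|$ is bounded. The reason is that removing a vertex of degree below $(1/2-\varepsilon)n$ costs less than the drop in the extremal function. Since $e(G)$ exceeds $\lfloor (n-r)^2/4\rfloor$ by at least $\binom{r+1}{2}$, iterating this removal can be done only $O(1)$ times before the Füredi--Gunderson bound is contradicted.

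In $G-W$ every vertex has almost all of the opposite side as neighbours. Hence between any two vertices of $X\cup Y\setminus W$ there are paths of every length of the correct parity between $2$ and about $n/2$. The odd-cycle lemma then follows: an edge inside $X\setminus W$ (or $Y\setminus W$) would close a $C_{2\ell+1}$. The same holds for any connected structure meeting the core in two vertices that offers a path of the wrong parity. Therefore every odd cycle, and indeed every non-bipartite block, meets the dense core in at most one vertex.

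For the structural conclusion, let $B$ be the largest subgraph containing the core $(X\cup Y)\setminus W$ that stays bipartite with respect to an extension of the cut. Each vertex of $W$ is either absorbed into $B$ or lies in a block that attaches to $B$ at a single cut vertex. This is exactly the suspension form of Definition~\ref{def:suspension-family}.

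Let $p$ be the number of outside vertices. I then compare edges:
\[
e(G)\le \left\lfloor\frac{(n-p)^2}{4}\right\rfloor+e(\text{suspended parts}).
\]
Each outside vertex forfeits about $n/2$ core edges but gains at most $p$ edges inside the suspended parts. Thus, for $n\ge 100\ell$, the hypothesis forces $p\le r$. If $p\le r-1$, then $G\in\cG_{n,r+1}$. If $p=r$, equality requires that all suspended parts together have exactly $\binom{r+1}{2}$ edges on $r+1$ vertices. This means a single $K_{r+1}$ suspended on a $T_{n-r,2}$, which is the stated exception. The restriction $r\le 2\ell-1$ guarantees that $K_{r+1}$ itself is $C_{2\ell+1}$-free, so the exception genuinely occurs.

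The main obstacle is making the counting tight with the weak hypothesis $n\ge100\ell$ rather than "$n$ sufficiently large". Handling vertices of $W$ adjacent to both sides of the core needs care, since they are not clearly outside vertices. I would first try to show that such a vertex has neighbours on both sides in only a bounded way. If it did, then together with two core vertices on opposite sides it would produce an odd path of adjustable length and hence a $C_{2\ell+1}$. This would force every $W$-vertex either into $B$ or into a single-vertex attachment.
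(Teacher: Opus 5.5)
The paper gives no proof of this statement. It is quoted from Zou, Li and Peng and is only applied here with $\ell$ and $r$ fixed and $n\ge n_0$. Your outline therefore has to stand on its own, and it has two genuine gaps.

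First, it does not reach the stated range $n\ge100\ell$. Every ingredient you use is asymptotic with $\ell$ fixed: $o(n^2)$-closeness from Erd\H{o}s--Simonovits stability, the thresholds $\varepsilon n$ and $(1/2-\varepsilon)n$, and the conclusion $|W|=O(1)$. The statement, however, allows $n=100\ell$ and $r=2\ell-1$. There $t_{n,2}-e(G)$ can be about $rn/2\approx n^2/100$, a constant proportion of $n^2$. Your deletion argument lowers the deficit $\lfloor m^2/4\rfloor-e$ by only about $\varepsilon m$ per deleted vertex. It therefore tolerates on the order of $\ell/\varepsilon$ deletions, which for small $\varepsilon$ is comparable to or larger than $n$. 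So it neither keeps the graph inside the F\"uredi--Gunderson range nor bounds $W$. Your final estimate via $\binom{p+1}{2}$ is only useful when $p$ is small compared with $n$. You name this as the main obstacle but give no argument for it; the stated theorem needs estimates whose constants are linear in $\ell$. An $n\ge n_0(\ell)$ version would suffice for how this paper uses the theorem, but that is a weaker statement.

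Second, even for large $n$ the structural step is circular. Your parity argument rules out a wrong-parity path through $W$ between two core vertices only when that path has length at most $2\ell$. Only then can a core path close it into a cycle of length exactly $2\ell+1$, and long handles survive this test. Join two vertices in the same part of $K_{a,b}$ by a path through $2m\ge2\ell$ new vertices. The result is $C_{2\ell+1}$-free, and its odd cycles meet the core in at least three vertices. Moreover, every bipartite $B$ containing the core must omit a handle vertex, and the omitted part of the handle is then attached to $B$ at two distinct vertices, not at a single cut vertex. When you assert that every non-bipartite block meets the core in at most one vertex, you only know $|W|=O(1)$, with a constant that may well exceed $2\ell$, so the assertion is unproved. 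Your final inequality $e(G)\le\lfloor (n-p)^2/4\rfloor+e(\text{suspended parts})$ presupposes exactly this structure.

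The order must be reversed. First show that every vertex outside the core has only boundedly many neighbours in it, as in Lemma~\ref{lem:zlp-core}. Then $e(G)\le\lfloor (n-\tau)^2/4\rfloor+O(\tau)+\binom{\tau}{2}$ together with the hypothesis forces at most $\tau\le r\le2\ell-1$ outside vertices. Only then is every wrong-parity path through outside vertices short enough for your parity argument. From there, the absorption into $B$, the single-cut-vertex attachments and your equality analysis go through as you describe.
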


We also use the following consequence of Lemma~3.3 in \cite{ZouLiPeng2025}.

\begin{lemma}[Zou, Li and Peng \cite{ZouLiPeng2025}]\label{lem:zlp-core}
Let $\ell\ge2$ and $c\ge2$ be fixed. For all sufficiently large $n$, every $n$-vertex $C_{2\ell+1}$-free graph $G$ with $e(G)\ge (n-c)^2/4$ has a set $W\subseteq V(G)$ such that $H=G[W]$ is bipartite, $|W|\ge n-2c$, $\delta(H)\ge11c$, and $d_W(v)<11c$ for every $v\in V(G)\setminus W$.
\end{lemma}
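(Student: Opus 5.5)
The plan is a ``clean the low-degree vertices, then absorb'' argument in three steps. \textbf{Step 1} produces a large core of high minimum degree. Starting from $G_0=G$, as long as the current graph $G_j$ on $m=n-j$ vertices has a vertex of degree less than $m/2-c$, delete it. By Füredi--Gunderson, $e(G_j)\le\lfloor m^2/4\rfloor$ for every $j$ (while $m\ge 4\ell-2$). On the other hand,
\[
e(G_j)\ \ge\ \frac{(n-c)^2}{4}-\sum_{i<j}\Bigl(\frac{n-i}{2}-c\Bigr).
\]
Comparing the two bounds, the right-hand side exceeds $(n-j)^2/4$ once $j$ passes roughly $c$ (up to an additive constant), since each deletion gains about $c$ edges relative to the Turán count. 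Hence the process stops after at most $2c$ deletions, say with vertex set $W_0$, where $|W_0|\ge n-2c$ and $\delta(G[W_0])\ge |W_0|/2-c$. I would do this arithmetic carefully first, choosing the threshold ($m/2-c$ or a slightly different constant) so that the bound $j\le 2c$ comes out.

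\textbf{Step 2} shows that $G[W_0]$ is bipartite and in fact nearly complete bipartite. $G[W_0]$ is $C_{2\ell+1}$-free with $m_0=|W_0|$ large and minimum degree $m_0/2-O(c)$. By the Erdős--Simonovits stability theorem (or directly from the edge count), there is a partition $W_0=X\cup Y$ with only $O(n)$ edges inside the parts. Every vertex then has $m_0/2-O(c)$ neighbours, almost all of them across the partition. Suppose some edge $uv$ lay inside $X$. Using the high minimum degree, we can find vertex-disjoint paths of every prescribed length between suitable neighbourhoods in the dense bipartite graph $G[X,Y]$. Closing $uv$ into a cycle through $Y$ and $X$ of length exactly $2\ell+1$ then gives a $C_{2\ell+1}$, a contradiction. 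For a wrongly placed vertex (one with more neighbours in its own part), we first move it, as in a max-cut argument. This yields $H_0=G[W_0]$ bipartite with $\delta(H_0)\ge m_0/2-O(c)\ge 11c$.

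\textbf{Step 3} absorbs the outside vertices. Let $v\notin W$ with $d_W(v)\ge 11c$, where $W$ is the current core with bipartition $X\cup Y$. If $v$ had neighbours $x\in X$ and $y\in Y$, then, since $H$ is a nearly complete bipartite graph, there is an $x$--$y$ path in $H$ of every odd length up to a linear bound avoiding any bounded set. Taking one of length $2\ell-1$ and closing it through $v$ gives a $C_{2\ell+1}$. Hence all neighbours of $v$ in $W$ lie on one side. We add $v$ to the other side; bipartiteness is preserved, and $v$ has degree at least $11c$ in the new core while other degrees only increase. We repeat until no outside vertex has $d_W(v)\ge 11c$. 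The resulting $W$ satisfies all conclusions, and $|W|$ has only grown, so $|W|\ge n-2c$.

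The main obstacle is Step 2: extracting exact bipartiteness, not merely near-bipartiteness, from the minimum-degree condition, and embedding odd cycles of the precise length $2\ell+1$ through a single bad edge. I would handle this via the standard embedding of paths of prescribed length in dense bipartite graphs, where any two vertices on opposite sides have $\Omega(n)$ common-neighbourhood structure. The bookkeeping in Step 1 to get exactly $|W|\ge n-2c$ is routine but must be done with care.
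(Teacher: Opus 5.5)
\textbf{Step 1 fails, and the bound $|W|\ge n-2c$ is lost with it.} The paper does not prove this lemma; it quotes it from Lemma~3.3 of Zou, Li and Peng. So your argument has to stand on its own.

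With threshold $m/2-c$, one deletion is only guaranteed to lower the deficit $m^2/4-e(G_j)$ by about $c$. The initial deficit, however, can be $\frac{n^2}{4}-\frac{(n-c)^2}{4}=\frac{cn}{2}-\frac{c^2}{4}$, which is linear in $n$. Your own lower bound gives
\[
\frac{(n-c)^2}{4}-\sum_{i<j}\Bigl(\frac{n-i}{2}-c\Bigr)-\frac{(n-j)^2}{4}=j\Bigl(c-\frac14\Bigr)-\frac{cn}{2}+\frac{c^2}{4}.
\]
This becomes positive only when $j$ is of order $n/2$, not $2c$.

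This is a genuine failure, not a lossy estimate. For even $n$, let $G=K_{n/2+d,\,n/2-d}$ with $d=\lfloor\sqrt{cn/2-c^2/4}\rfloor$. Then $G$ is bipartite and $e(G)=n^2/4-d^2\ge(n-c)^2/4$. After $j$ deletions, every remaining vertex of the larger side has degree $n/2-d<(n-j)/2-c$ for all $j<2(d-c)$. Vertices of the smaller side do not fall below the threshold before that point. So your process removes $\Theta(\sqrt n)$ vertices, whatever the order. Consequently the final claim $|W|\ge n-2c$ (``$|W|$ has only grown'') has no support, and no threshold of the form $m/2-C$ with $C$ constant repairs it.

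\textbf{Repairing Step 1 forces Steps 2 and 3 to be redone.} What does work is a threshold a constant fraction below $m/2$. If you delete while $\deg<m/4$, each deletion lowers the deficit by about $m/4$. The same computation then shows that $2c+1$ deletions would contradict F\"uredi--Gunderson, so $|W_0|\ge n-2c$.

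The price is that you now only have $\delta(G[W_0])\ge|W_0|/4$. Steps~2 and~3 as written rely on $\delta\ge m_0/2-O(c)$, in particular on the claim that any prescribed $x\in X$ and $y\in Y$ are joined by paths of every odd length. For vertices of degree about $m_0/4$, and for absorbed vertices with only about $11c$ neighbours, a path with exactly $2\ell-1$ edges between two \emph{prescribed} vertices is not automatic. For $\ell=2$ it requires an edge between $N(x)$ and $N(y)$, and two small sets may span no edge even though only $O(n)$ cross edges are missing.

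To fix this you need three things:
\begin{enumerate}
\item route the odd cycles through vertices of nearly full degree;
\item prove a count showing that only $O(c)$ core vertices have degree below, say, $0.4|W_0|$, since each such vertex costs a linear share of the $O(n)$ deficit, so that the $\ge 11c$ neighbours in the statement include high-degree vertices to route through (presumably this is what the constant $11c$ is for);
\item keep the number of absorbed vertices bounded, so that each absorbed vertex retains neighbours in $W_0$.
\end{enumerate}

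A smaller point: Erd\H{o}s--Simonovits stability gives $o(n^2)$ edges inside the parts, not $O(n)$. This is harmless once bipartiteness is proved, but it should not be asserted.
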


We also use the following spectral bound.

\begin{lemma}[Zhai, Lin and Shu \cite{ZhaiLinShu2021}]\label{lem:zls}
For every fixed $\ell\ge2$, every $C_{2\ell+1}$-free graph $G$ with $m$ edges satisfies
\[
        \lambda(G)\le \frac{\ell-\frac12+\sqrt{4m+(\ell-\frac12)^2}}{2}\le\sqrt m+\ell,
\]
where $\lambda(G)$ is the spectral radius of the adjacency matrix of $G$.
\end{lemma}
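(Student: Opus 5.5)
The second inequality is elementary. With $c=\ell-\frac12$ we have $\sqrt{4m+c^2}\le 2\sqrt m+c$, so the middle expression is at most $\sqrt m+c\le\sqrt m+\ell$. The content is the first inequality. Since $\lambda$ is the larger root of $z^2-cz-m$, it is equivalent to
\[
\lambda^2-c\lambda\le m .
\]
I would prove this with the standard Perron-vector walk count, in the style of Nikiforov.

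\textbf{Step 1: reduction to a local inequality.} Assume $G$ is connected; otherwise pass to a component attaining $\lambda$, which has at most $m$ edges. Let $\mathbf x$ be the Perron vector, normalised so that $\max_v x_v=x_u=1$. Put $A=N(u)$ and $B=V(G)\setminus(A\cup\{u\})$. The eigen-equations give $\lambda=\sum_{w\in A}x_w$ and
\[
\lambda^2=\sum_{v\in A}\sum_{w\in N(v)}x_w
= d(u)+\sum_{w\in A}d_A(w)x_w+\sum_{w\in B}d_A(w)x_w .
\]
Using $x_w\le1$ on $B$ gives
\[
\lambda^2-c\lambda\le d(u)+e(A,B)+\sum_{w\in A}\bigl(d_A(w)-c\bigr)x_w .
\]
Since $m\ge d(u)+e(A)+e(A,B)$, it suffices to show
\[
\sum_{w\in A}\bigl(d_A(w)-c\bigr)x_w\le e(A).
\]
Because $0\le x_w\le1$, it is enough to prove
\[
\sum_{w\in A}\bigl(d_A(w)-c\bigr)^+\le e(G[A]).
\]

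\textbf{Step 2: use the forbidden cycle.} If $G[A]$ contained a path on $2\ell$ vertices, its two ends would both be adjacent to $u$, producing a $C_{2\ell+1}$. So $G[A]$ is $P_{2\ell}$-free. Both sides of the target inequality are additive over components, so it suffices to prove the following: for every connected $P_{2\ell}$-free graph $K$,
\[
\sum_{w\in V(K)}\bigl(d_K(w)-\ell+\tfrac12\bigr)^+\le e(K).
\]

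\textbf{Step 3: the combinatorial inequality (main obstacle).} I expect this step to be the hardest part.
\begin{itemize}
\item If all degrees in $K$ are at most $\ell-1$, the left side is $0$.
\item Otherwise, let $S$ be the set of vertices of degree at least $\ell$. The left side equals $\sum_{w\in S}d_K(w)-(\ell-\frac12)|S|$.
\item Since $\sum_{w\in S}d_K(w)\le e(K)+e(K[S])$, it suffices to show $e(K[S])\le(\ell-\frac12)|S|$.
\item This follows from Erd\H{o}s--Gallai: $K[S]$ is $P_{2\ell}$-free, so $e(K[S])\le\frac{2\ell-2}{2}|S|=(\ell-1)|S|$.
\end{itemize}

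I would check carefully that the bound $\sum_{w\in S}d_K(w)\le e(K)+e(K[S])$ is correct. Each edge inside $S$ is counted twice in $\sum_{w\in S}d_K(w)$ and once in $e(K)+e(K[S])$. Each edge with exactly one end in $S$ is counted once on both sides. So the inequality holds, with slack, and the slack suggests the constant $\ell-\frac12$ is not tight.

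If some degenerate case breaks this argument, for example when $S$ is empty or very small, I would fall back on Kopylov's structural description of connected $P_{2\ell}$-free graphs, as in Zhai--Lin--Shu.
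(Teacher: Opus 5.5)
The paper gives no proof of this lemma. It quotes it from Zhai, Lin and Shu and uses it only through the consequence $\lambda(G)\le\sqrt m+\ell$ in Lemma~\ref{lem:spectral-init}. Your argument is a correct, self-contained proof of the full statement, so it differs from the paper only in that it replaces a citation with an elementary derivation.

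Each step checks out:
\begin{itemize}
\item your expression $\lambda^2=d(u)+\sum_{w\in A}d_A(w)x_w+\sum_{w\in B}d_A(w)x_w$;
\item the reduction to $\sum_{w\in A}(d_A(w)-c)^+\le e(G[A])$, which uses only $0\le x_w\le 1$;
\item the observation that $G[A]$ is $P_{2\ell}$-free;
\item the double-counting bound $\sum_{w\in S}d_A(w)\le e(G[A])+e(G[S])$, combined with Erd\H{o}s--Gallai.
\end{itemize}

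Your hedges at the end are unnecessary. If $S=\emptyset$ the left side is simply $0$. The Erd\H{o}s--Gallai bound $e\le(\ell-1)N$ holds for every $P_{2\ell}$-free graph on $N$ vertices, with no restriction on $N$. So there is no degenerate case, and you never need Kopylov's structure theorem. Likewise, the reduction to connected $K$ in Step~2 is never used.

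Your computation also leaves slack $|S|/2$, and it goes through verbatim with $c=\ell-1$. Vertices with $d_A(w)\le\ell-1$ then contribute nonpositive terms and can be dropped, while for $w\in S$ the coefficient $d_A(w)-\ell+1$ is positive and $x_w\le1$. This gives $\lambda^2-(\ell-1)\lambda\le m$, which is attained by $K_{2\ell}$. That is slightly stronger than the stated bound and more than enough for the paper's use.

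What your route buys over the paper's black-box citation is a short proof needing only the Perron vector and Erd\H{o}s--Gallai.
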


We next prove the proposition from the introduction which identifies the members of $\cT^*(r,n)$ with the largest number of copies of $P_t$.

\noindent\textbf{Proposition~\ref{prop:Tstar-path-choice}.}
Fix integers $t\ge4$ and $r\ge3$. If $n-r+1$ is even, then $\cS_{t,r}(n)=\cT^*(r,n)$. If $n-r+1$ is odd, let $T_+(r,n)$ and $T_-(r,n)$ be the graphs obtained by suspending $K_r$ at a vertex in the larger and smaller part of $T_{n-r+1,2}$, respectively. If $t$ is odd, then $\cS_{t,r}(n)=\{T_-(r,n)\}$. If $t=2q$ is even, then
\[
        \cS_{t,r}(n)=
        \begin{cases}
        \{T_+(r,n)\}, & r\le q+1,\ (t,r)\ne(4,3),\\
        \{T_+(r,n),T_-(r,n)\}, & (t,r)=(4,3),\\
        \{T_-(r,n)\}, & r\ge q+2.
        \end{cases}
\]

\begin{proof}
Fix $t\ge4$ and $r\ge3$, and let $d=r-1$. For $a,b\ge1$, let $A$ and $B$ be the parts of $K_{a,b}$, with $|A|=a$ and $|B|=b$. Recall that $\N(P_{2q},K_{a,b})=(a)_q(b)_q$ and $\N(P_{2q+1},K_{a,b})=((a+b-2q)/2)(a)_q(b)_q$.

For $L\ge2$, let $R_L^A(a,b)$ be the number of paths on $L$ vertices in $K_{a,b}$ with one fixed endpoint in $A$, and define $R_L^B(a,b)$ analogously. Then
\[
        R_{2p}^A(a,b)=(b)_p(a-1)_{p-1},
        \qquad
        R_{2p+1}^A(a,b)=(b)_p(a-1)_p,
\]
and $R_L^B(a,b)$ is obtained by reversing $a$ and $b$.

For $Z\in\{A,B\}$, let $T_Z(a,b)$ be the graph obtained from $K_{a,b}$ by suspending $K_r$ at a vertex in $Z$. A copy of $P_t$ in $T_Z(a,b)$ is either contained in $K_{a,b}$, contained in the suspended clique, or uses $j$ outside vertices followed by a path in $K_{a,b}$ whose endpoint is the suspension vertex. Therefore
\begin{equation}\label{eq:Tstar-path-count}
        \N(P_t,T_Z(a,b))
        =\N(P_t,K_{a,b})+\N(P_t,K_r)
        +\sum_{j=1}^{\min\{d,t-2\}}(d)_jR^Z_{t-j}(a,b).
\end{equation}
The first two terms in \eqref{eq:Tstar-path-count} do not depend on $Z$, and hence
\begin{equation}\label{eq:Tstar-path-difference}
        \N(P_t,T_A(a,b))-\N(P_t,T_B(a,b))
        =\sum_{j=1}^{\min\{d,t-2\}}(d)_j
        \bigl(R^A_{t-j}(a,b)-R^B_{t-j}(a,b)\bigr).
\end{equation}

If $a=b$, then $T_A(a,b)$ and $T_B(a,b)$ are isomorphic. It remains to consider $a=b+1$, with $A$ the larger part. Write
\[
        D_t(a,b)=\N(P_t,T_A(a,b))-\N(P_t,T_B(a,b)).
\]
From the formulas for $R_L^A$ and $R_L^B$,
\[
\begin{aligned}
        R_{2p}^A(a,b)-R_{2p}^B(a,b)
        &=-b^{2p-2}+O(b^{2p-3}),\\
        R_{2p+1}^A(a,b)-R_{2p+1}^B(a,b)
        &=p b^{2p-2}+O(b^{2p-3}).
\end{aligned}
\]
Using \eqref{eq:Tstar-path-difference}, we obtain
\[
        D_{2q+1}(a,b)=-d b^{2q-2}+O(b^{2q-3})
\]
and
\[
        D_{2q}(a,b)=d(q-d)b^{2q-4}+O(b^{2q-5}).
\]
Thus, for odd $t$, the graph suspended at the smaller part has more copies of $P_t$. For $t=2q$, the graph suspended at the larger part has more copies of $P_t$ when $d<q$, while the graph suspended at the smaller part has more copies of $P_t$ when $d>q$.

It remains only to consider $d=q$ in the even case. If $q=2$, then $t=4$ and $d=2$, and a direct calculation gives
\[
\begin{aligned}
        D_4(a,b)
        &=2\bigl(R_3^A(a,b)-R_3^B(a,b)\bigr)
          +2\bigl(R_2^A(a,b)-R_2^B(a,b)\bigr)\\
        &=2\bigl(b(a-1)-a(b-1)\bigr)+2(b-a)=0.
\end{aligned}
\]
If $q\ge3$, the next non-zero term is
\[
        D_{2q}(a,b)=q(q-1)(q-2)b^{2q-6}+O(b^{2q-7})>0.
\]
Since $d=r-1$, this gives exactly the description of $\cS_{t,r}(n)$ in the introduction.
\end{proof}

The next two propositions determine the extremal graphs for the numbers defined in the introduction.

\begin{proposition}\label{prop:path-extremal}
Fix integers $t\ge4$ and $r\ge3$. For all sufficiently large $n$,
\[
        \cB^{\mathrm P}_{t,r}(n)=\cB^{\mathrm P,\chi}_{t,r}(n)=\cS_{t,r}(n).
\]
Consequently, for every $S\in\cS_{t,r}(n)$, $b^{\mathrm P}_{t,r}(n)=b^{\mathrm P,\chi}_{t,r}(n)=\N(P_t,S)$.
\end{proposition}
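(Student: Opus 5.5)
The plan is to first pin down the shape of an arbitrary $H\in\cG_{n,r+1}\setminus\cG_{n,r}$, then compare $\N(P_t,H)$ with $\N(P_t,T_{n-r+1,2})$ plus the contribution of the outside vertices. The estimate reduces $H$ step by step to a member of $\cT^*(r,n)$, and Proposition~\ref{prop:Tstar-path-choice} then finishes.

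\textbf{Structure of $H$.} Since $H\in\cG_{n,r+1}$, it has a representation with at most $r-1$ outside vertices. Since $H\notin\cG_{n,r}$, no representation has at most $r-2$. Hence every representation has exactly $d=r-1$ outside vertices, and the bipartite core $B$ has $m=n-r+1$ vertices with parts of sizes $a\ge b$. Each suspended graph meets $B$ in a single vertex. So a copy of $P_t$ cannot leave $B$ into outside vertices and come back, and its outside vertices form at most two terminal segments. I split the copies into three types:
\begin{itemize}
\item copies inside $B$;
\item copies inside the outside part together with the suspension vertices, numbering $O(1)$;
\item copies consisting of a core path together with one or two terminal outside segments.
\end{itemize}
For a suspension vertex $v$ carrying $d_v$ outside vertices, the copies with one segment of $j$ outside vertices at $v$ number at most $(d_v)_j R_{t-j}(v)$. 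Here $R_{t-j}(v)$ is the number of paths on $t-j$ vertices in $B$ starting at $v$, which is $O(n^{t-j-1})$. Copies with two segments have at most $t-4$ core vertices besides the two fixed endpoints, so they contribute $O(n^{t-4})$.

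\textbf{Step 1: reduce the core to a complete bipartite graph.} Adding missing cross edges to $B$, with the part sizes fixed, does not decrease any of these counts. So the outside contribution is at most the one computed in $K_{a,b}$. If $B\ne K_{a,b}$, the core count drops by $\Omega(n^{t-3})$ per missing edge. This follows from Theorem~\ref{thm:local-counting}, or directly, since each missing edge lies in $\Theta(n^{t-3})$ copies of $P_t$ in $K_{a,b}$. I still need to check that this loss beats the change in the outside term, which is only $O(n^{t-3})$ per edge with a smaller constant. If that comparison is delicate, I would instead argue by monotonicity: removing an edge from $K_{a,b}$ strictly decreases the total count, because the core loss dominates.

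If $a-b\ge2$, moving one vertex across changes $\N(P_t,K_{a,b})$ by $\Theta(n^{t-2})$, whereas all outside terms change only by $O(n^{t-3})$. So $B=T_{m,2}$.

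\textbf{Step 2: merge the outside vertices into a single clique.} With the core fixed as $T_{m,2}$, the $j=1$ terms give $\sum_v d_vR_{t-1}(v)$. Since $R_{t-1}(v)$ depends only on the part of $v$, this is at most $dR_{t-1}$ for the better part. The $j=2$ terms, of order $n^{t-3}$, are maximized only when all $d$ outside vertices hang on one vertex $v$ and induce, together with $v$, a clique. Splitting them loses $\Theta(n^{t-3})$ via $\sum_v (d_v)_2<(d)_2$, and missing edges lose $\Theta(n^{t-3})$ in $(d_v)_2$. Both losses dominate the $O(n^{t-4})$ two-segment gain. The same comparison applies to the higher $j$ terms.

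Thus the maximizers are exactly graphs obtained from $T_{n-r+1,2}$ by suspending $K_r$, i.e.\ $\cT^*(r,n)$. Among these, Proposition~\ref{prop:Tstar-path-choice} selects $\cS_{t,r}(n)$, so $\cB^{\mathrm P}_{t,r}(n)=\cS_{t,r}(n)$.

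\textbf{Chromatic version and main obstacle.} Every $S\in\cS_{t,r}(n)$ contains $K_r$, so $\chi(S)\ge r$. The chromatic maximum is taken over a subfamily that still contains $\cS_{t,r}(n)$, so it equals the unrestricted one with the same extremal set. The main obstacle is the bookkeeping of orders: the decisions in Proposition~\ref{prop:Tstar-path-choice} happen at order $n^{t-4}$ or even $n^{t-6}$. So every reduction step must strictly lose at an order higher than all terms it could perturb, or else be done exactly. In particular, Step 1 with $B\ne K_{a,b}$ when $a-b\le1$ needs care; I would handle it by exhibiting, for each missing edge, $\Theta(n^{t-3})$ lost core paths against at most $O(n^{t-3})$ outside paths that depend on that edge, with explicit constants.
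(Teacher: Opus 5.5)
Your proposal is correct and follows essentially the paper's route: complete the core and the suspended graphs using edge-monotonicity, balance the core via the $\Theta(n^{t-2})$ versus $O(n^{t-3})$ comparison, merge the $d=r-1$ outside vertices into a single $K_r$ because splitting loses $\Omega(n^{t-3})$ against $O(n^{t-4})$ errors, then apply Proposition~\ref{prop:Tstar-path-choice} and use that every $S\in\cS_{t,r}(n)$ has $\chi(S)\ge r$. Two small fixes: you should first note that an extremal graph has $\Theta(n^t)$ copies of $P_t$, so both core parts have linear size, which your $\Theta(n^{t-2})$ and $\Theta(n^{t-3})$ claims need; and the worry in Step~1 about core loss versus outside change is unnecessary, because adding an edge never decreases any of the counts and strictly increases the total once the new edge lies in some copy of $P_t$, exactly as in the paper's $\widehat G$ argument.
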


\begin{proof}
Let $d=r-1$. The family $\cG_{n,r}$ is closed under deleting edges. Hence, if $G\in\cG_{n,r+1}\setminus\cG_{n,r}$ and $G^+$ is obtained from $G$ by adding edges with $G^+\in\cG_{n,r+1}$, then $G^+\notin\cG_{n,r}$. Also, in any expression of a graph in $\cG_{n,r+1}\setminus\cG_{n,r}$ by a bipartite core and suspended graphs, the number of outside vertices is $d$.

Let $G\in\cG_{n,r+1}\setminus\cG_{n,r}$ attain the maximum number of copies of $P_t$. Choose a bipartite core $B_0$ and suspended graphs for $G$ so that the outside set has size $d$, and let $A\cup B$ be a bipartition of $B_0$ with $|A|=a$ and $|B|=b$. Since $\cT^*(r,n)\subseteq\cG_{n,r+1}\setminus\cG_{n,r}$ and every member of $\cT^*(r,n)$ has $\Theta(n^t)$ copies of $P_t$, we must have $a,b=\Omega(n)$.

Let $\widehat G$ be obtained from $G$ by replacing $B_0$ with $K_{a,b}$ and replacing each suspended graph by the clique on its vertex set. Then $\widehat G\in\cG_{n,r+1}\setminus\cG_{n,r}$ and $G\subseteq\widehat G$. Since $a,b=\Omega(n)$ and $t$ is fixed, every edge in $E(\widehat G)\setminus E(G)$ is contained in a copy of $P_t$ in $\widehat G$. Indeed, an added cross-edge inside the core can be extended alternately in the two parts of $K_{a,b}$ until it has $t$ vertices. If an added edge joins the suspension vertex $x$ of some suspended graph to an outside vertex $u$, then one starts with $u x$ and continues from $x$ along an alternating path in the core; for $t=4$ this gives a path $u x y x'$ with $y$ in the opposite part of the core and $x'$ in the same part as $x$. If an added edge joins two outside vertices $u$ and $v$ in the same suspended graph, then one starts with $u v x$, where $x$ is the suspension vertex, and continues in the core; for $t=4$ this gives $u v x y$. Since the two core parts have linear size, the required distinct core vertices can always be chosen for all sufficiently large $n$. In each case the constructed copy contains the added edge and is therefore absent from $G$. Hence $G=\widehat G$ by the maximality of $G$.

We now maximize the number of copies of $P_t$ among the graphs obtained in this way. Such a graph is determined by $a+b=n-d$, by a partition $d=d_1+\cdots+d_s$ of the outside vertices into suspended cliques, and by the parts of the core containing the corresponding suspension vertices. For fixed data $\pi$, write $F_\pi(a,b)=\N(P_t,G_\pi(a,b))$ for the corresponding polynomial in $a$ and $b$.

Using the notation from the comparison above, let $d_i$ be the number of outside vertices in the $i$th suspended clique, and let $Z_i\in\{A,B\}$ be the side containing its suspension vertex. For $Z\in\{A,B\}$, set $Q_t^Z(a,b)=2R_{t-2}^Z(a,b)$. Since paths using at least three outside vertices, or two outside vertices from different suspended cliques, contribute only $O(n^{t-4})$ copies, we have
\[
\begin{aligned}
        F_\pi(a,b)
        &=\N(P_t,K_{a,b})
        +\sum_{Z_i=A} d_i R_{t-1}^A(a,b)
        +\sum_{Z_i=B} d_i R_{t-1}^B(a,b)\\
        &\quad
        +\sum_{Z_i=A}\binom{d_i}{2}Q_t^A(a,b)
        +\sum_{Z_i=B}\binom{d_i}{2}Q_t^B(a,b)
        +O(n^{t-4}).
\end{aligned}
\]
We first choose the sizes of the two parts of the complete bipartite core. Since $a+b=n-d$ is fixed, the number $\N(P_t,K_{a,b})$ is maximized when $a$ and $b$ are as equal as possible. More precisely, if $|a-b|\ge2$, then moving one vertex from the larger part to the smaller part increases $\N(P_t,K_{a,b})$ by $\Theta(n^{t-2})$. The terms involving outside vertices change by only $O(n^{t-3})$. Hence no extremal graph has $|a-b|\ge2$, and the core is $T_{n-d,2}=T_{n-r+1,2}$.

Now fix this balanced core. The side of the suspension vertex and the partition $d=d_1+\cdots+d_s$ must be compared together. Indeed, when the two parts of the core have sizes differing by one, the difference between $R_{t-1}^A(a,b)$ and $R_{t-1}^B(a,b)$ may have the same order as the terms involving two outside vertices.

For $Z\in\{A,B\}$ and $1\le j\le d$, define $M_Z(j)=jR_{t-1}^Z(a,b)+\binom j2 Q_t^Z(a,b)$.
Then the expansion above can be written as
\[
        F_\pi(a,b)=\N(P_t,K_{a,b})+\sum_{i=1}^s M_{Z_i}(d_i)+O(n^{t-4}).
\]
Let $\Lambda_Z=R_{t-1}^Z(a,b)+((d-1)/2)Q_t^Z(a,b)$, and choose $Z^*\in\{A,B\}$ so that $\Lambda_{Z^*}$ is maximal. Since $a,b=n/2+O(1)$, we have $Q_t^Z(a,b)=2R_{t-2}^Z(a,b)=c_{t,Z}n^{t-3}+O(n^{t-4})$ for some constant $c_{t,Z}>0$. Hence
\[
\begin{aligned}
        \sum_{i=1}^s M_{Z_i}(d_i)
        &=\sum_{i=1}^s d_i\left(R_{t-1}^{Z_i}(a,b)+\frac{d_i-1}{2}Q_t^{Z_i}(a,b)\right)\\
        &\le \sum_{i=1}^s d_i\left(R_{t-1}^{Z_i}(a,b)+\frac{d-1}{2}Q_t^{Z_i}(a,b)\right)\\
        &\le d\Lambda_{Z^*}=M_{Z^*}(d).
\end{aligned}
\]
If $s\ge2$, then each $d_i<d$, and the first inequality loses at least
\[
        \frac12\sum_{i=1}^s d_i(d-d_i)Q_t^{Z_i}(a,b)=\Omega(n^{t-3}),
\]
which dominates the error term $O(n^{t-4})$. Thus no extremal graph can have $s\ge2$. Hence all outside vertices lie in one suspended clique. The comparison for $\cT^*(r,n)$ above now gives precisely the members of $\cS_{t,r}(n)$. Each contains a clique $K_r$ and is $r$-colourable, so $\chi(S)=r$ for every $S\in\cS_{t,r}(n)$; hence the same graphs are extremal in the high-chromatic problem.
\end{proof}

\begin{proposition}\label{prop:cycle-extremal}
Fix integers $a\ge2$ and $r\ge3$. For all sufficiently large $n$, $b^{\circ}_{a,r}(n)=b^{\circ,\chi}_{a,r}(n)=\N(C_{2a},T)$ for every $T\in \cT^*(r,n)$, and $\cB^{\circ,\chi}_{a,r}(n)=\cT^*(r,n)$.
Moreover, the unrestricted even-cycle extremal family is
\[
        \cB^{\circ}_{a,r}(n)=
        \begin{cases}
        \cR_r(n), & r<2a,\\
        \cT^*(r,n), & r\ge2a.
        \end{cases}
\]
\end{proposition}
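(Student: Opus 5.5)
I will follow the reduction used for Proposition~\ref{prop:path-extremal}. Let $d=r-1$ and take $G\in\cG_{n,r+1}\setminus\cG_{n,r}$ with the maximum number of copies of $C_{2a}$. Fix an expression of $G$ with bipartite core $B_0$, parts of sizes $a',b'$, and exactly $d$ outside vertices. Every member of $\cT^*(r,n)$ lies in this family and has $\Theta(n^{2a})$ copies of $C_{2a}$. Since cycles cannot pass through a cut vertex, each copy of $C_{2a}$ in $G$ lies either in $B_0$ or inside a single suspended graph, and the latter contribute only $O(1)$ copies. Hence $\N(C_{2a},B_0)=\Theta(n^{2a})$, which forces $a',b'=\Omega(n)$.

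Next I complete the graph. Let $\widehat G$ be obtained by replacing $B_0$ with $K_{a',b'}$ and each suspended graph with the clique on its vertex set; then $\widehat G\in\cG_{n,r+1}\setminus\cG_{n,r}$. Now
\[
\N(C_{2a},\widehat G)=\N(C_{2a},K_{a',b'})+\sum_i\N(C_{2a},K_{d_i+1}),
\]
where $d_i$ is the number of outside vertices of the $i$th suspended graph. A missing cross-edge of the core lies in a copy of $C_{2a}$ in $K_{a',b'}$ because both parts have linear size. So if $B_0\ne K_{a',b'}$, then $\widehat G$ has strictly more copies, and by maximality $B_0$ is complete bipartite. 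By the same balancing step as before, moving a vertex when $|a'-b'|\ge2$ gains $\Theta(n^{2a-2})$ copies and changes nothing else, so the core is $T_{n-d,2}=T_{n-r+1,2}$.

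It remains to optimize over the suspended part. When $r<2a$, every suspended graph has at most $d+1=r<2a$ vertices and therefore contains no $C_{2a}$. The count is then $\N(C_{2a},T_{n-r+1,2})$, independent of the suspended graphs. So the extremal graphs are exactly those with core $T_{n-r+1,2}$, and any graph of $\cR_r(n)$ attains this count; this gives $\cB^\circ_{a,r}(n)=\cR_r(n)$. When $r\ge2a$, the count is $\N(C_{2a},T_{n-r+1,2})+\sum_i\N(C_{2a},H_i)$ with $\sum d_i=d$. Since $\N(C_{2a},K_m)=\binom m{2a}(2a-1)!/2$ is strictly superadditive in the outside counts, I expect to use the inequality $\binom{x+1}{2a}+\binom{y+1}{2a}<\binom{x+y+1}{2a}$ for $x,y\ge1$ and $x+y+1\ge 2a$. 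This is strict because the right-hand side counts sets meeting both sides of a split. It follows that the maximum requires a single suspended $K_r$, and by cut-vertex symmetry either side of the core works. Hence $\cB^\circ_{a,r}(n)=\cT^*(r,n)$.

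For the chromatic version, members of $\cT^*(r,n)$ have $\chi=r$, which gives $b^{\circ,\chi}_{a,r}=b^\circ_{a,r}=\N(C_{2a},T)$. When $r\ge2a$ the extremal families agree directly. When $r<2a$, I must show that among graphs of $\cR_r(n)$ with $\chi\ge r$, only the members of $\cT^*(r,n)$ occur. A graph in $\cR_r(n)$ with $\chi\ge r$ must contain a suspended graph that is not $(r-1)$-colourable. A suspended graph with $j$ outside vertices has at most $j+1$ vertices, so it needs $j+1\ge r$; thus $j=d$ and the suspended graph is $K_r$. The main obstacle is the edge-completion step: checking carefully that every added cross-edge really creates a new $C_{2a}$, and that the two cases $r<2a$ and $r\ge2a$ are separated correctly.
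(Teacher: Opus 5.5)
Your proposal is correct and follows essentially the same route as the paper: the cut-vertex decomposition of cycles, completing and then balancing the core to get $T_{n-r+1,2}$, superadditivity of clique cycle counts when $r\ge2a$ (your binomial inequality is the paper's injection argument in another form), and the observation that a suspended graph with at most $r$ vertices and chromatic number at least $r$ must be $K_r$. The one detail to add is that, for $r\ge2a$, the equality case also forces the single suspended graph to be complete, since every edge of $K_r$ lies in a copy of $C_{2a}$.
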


\begin{proof}
Let $d=r-1$. As in the proof of Proposition~\ref{prop:path-extremal}, $\cG_{n,r}$ is closed under deleting edges. Hence any expression of a graph in $\cG_{n,r+1}\setminus\cG_{n,r}$ by a bipartite core and suspended graphs has exactly $d$ outside vertices; otherwise the graph would already belong to $\cG_{n,r}$.

Let $G\in\cB^{\circ}_{a,r}(n)$. Choose a bipartite core $B$ and suspended graphs for $G$, with outside set of size $d$, and let $X\cup Y$ be a bipartition of $B$ with $|X|=x$ and $|Y|=y$. Since the core and each suspended graph meet in a cut vertex, every copy of $C_{2a}$ is contained either in the core or in one suspended graph. Since $\cT^*(r,n)\subseteq\cG_{n,r+1}\setminus\cG_{n,r}$ and every member of $\cT^*(r,n)$ has $\Theta(n^{2a})$ copies of $C_{2a}$, extremality gives $x,y=\Omega(n)$.

Let $\widehat G$ be obtained from $G$ by replacing $B$ with $K_{x,y}$ and leaving the suspended graphs unchanged. Then $\widehat G\in\cG_{n,r+1}\setminus\cG_{n,r}$ and $G\subseteq\widehat G$. If $B\ne K_{x,y}$, choose $uv\in E(K_{x,y})\setminus E(B)$. The number of copies of $C_{2a}$ in $K_{x,y}$ containing $uv$ is $\Theta(n^{2a-2})$, and all of them are absent from $G$. Hence $\N(C_{2a},\widehat G)>\N(C_{2a},G)$, a contradiction. Thus the core is $K_{x,y}$ with $x+y=n-d$.

Now $\N(C_{2a},K_{x,y})=(x)_a(y)_a/(2a)$, whereas the suspended graphs contribute only $O(1)$ copies. Therefore, for all sufficiently large $n$, $\N(C_{2a},K_{x,y})$ is maximized only when $|x-y|\le1$. Thus the core is $T_{n-r+1,2}$.

We first consider the case $r<2a$. In this case every suspended graph has at most $d+1=r<2a$ vertices, and hence contains no copy of $C_{2a}$. Consequently every graph in $\cR_r(n)$ has exactly $\N(C_{2a},T_{n-r+1,2})$ copies of $C_{2a}$. This is also the value of every graph in $\cT^*(r,n)$, because $K_r$ has fewer than $2a$ vertices. The preceding paragraphs show that every unrestricted extremal graph must have core $T_{n-r+1,2}$ and exactly $d$ outside vertices, hence lies in $\cR_r(n)$. Conversely, every graph in $\cR_r(n)$ has the same value as $\cT^*(r,n)$, and is therefore extremal. Thus $\cB^{\circ}_{a,r}(n)=\cR_r(n)$ for $r<2a$.

Assume next that $r\ge2a$. Write the outside sizes of the suspended graphs as $d_1,\ldots,d_m$, where $d_i\ge1$ and $\sum_i d_i=d$. Their total contribution is at most $\sum_{i=1}^m \N(C_{2a},K_{d_i+1})$. Since $\N(C_{2a},K_u)=(u)_{2a}/(4a)$, we have
\[
        \sum_i \N(C_{2a},K_{d_i+1})\le \N(C_{2a},K_{d+1}),
\]
as follows. Let $D_i$ be the outside vertex set of the $i$th suspended graph, let $x_i$ be its suspension vertex, let $D=\bigcup_iD_i$, and let $x$ be one new vertex. A copy of $C_{2a}$ in the clique on $\{x_i\}\cup D_i$ is mapped to a copy of $C_{2a}$ in the clique on $\{x\}\cup D$ by replacing $x_i$ with $x$ and keeping all outside vertices unchanged. These maps are injective, and their images are pairwise disjoint for different $i$. Hence the inequality above follows.

Moreover, if $m\ge2$ and $d+1=r\ge2a$, then the clique on $\{x\}\cup D$ contains a copy of $C_{2a}$ using outside vertices from two distinct sets $D_i$. This cycle is not obtained from the preceding maps, so the inequality is strict. Thus equality is possible only when all outside vertices lie in one suspended graph. Equality also requires that graph to be complete, because every edge of $K_r$ lies in a copy of $C_{2a}$. Hence, when $r\ge2a$, the unrestricted extremal graphs are precisely the graphs in $\cT^*(r,n)$.

It remains to identify the high-chromatic extremal graphs. Every graph $T\in\cT^*(r,n)$ satisfies $\chi(T)=r$ and has the unrestricted extremal value just computed. Conversely, if $G\in\cB^{\circ,\chi}_{a,r}(n)$, then $G\in\cB^{\circ}_{a,r}(n)$ and $\chi(G)\ge r$. The bipartite core is 2-colourable and different suspended graphs meet it only at cut vertices, so some suspended graph $J$ must satisfy $\chi(J)\ge r$. Since the total number of outside vertices is $r-1$, this suspended graph has at most $r$ vertices. Hence $|V(J)|=r$ and $J=K_r$. All outside vertices lie in this suspended clique, and therefore $G\in\cT^*(r,n)$. This proves $\cB^{\circ,\chi}_{a,r}(n)=\cT^*(r,n)$ and completes the proof.
\end{proof}

\begin{lemma}\label{lem:path-extremal-asymp}
For fixed $q\ge2$ and $r\ge3$,
\begin{align}
        b^{\mathrm P}_{2q,r}(n)&=\N(P_{2q},T_{n,2})-\frac{q(r-1)}{2^{2q-1}}n^{2q-1}+O(n^{2q-2}),\label{eq:path-even-extremal-asymp}\\
        b^{\mathrm P}_{2q+1,r}(n)&=\N(P_{2q+1},T_{n,2})-\frac{(2q+1)(r-1)}{2^{2q+1}}n^{2q}+O(n^{2q-1}).\notag
\end{align}
The same asymptotic formulas hold for $b^{\mathrm P,\chi}_{2q,r}(n)$ and $b^{\mathrm P,\chi}_{2q+1,r}(n)$, respectively.
\end{lemma}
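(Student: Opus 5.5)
By Proposition~\ref{prop:path-extremal}, for large $n$ we have $b^{\mathrm P}_{t,r}(n)=b^{\mathrm P,\chi}_{t,r}(n)=\N(P_t,S)$ for any $S\in\cS_{t,r}(n)$. It therefore suffices to compute $\N(P_t,S)$ for a single member $S=T_Z(a,b)$, with $a+b=n-d$, $d=r-1$ and $|a-b|\le1$. The tool is the exact count \eqref{eq:Tstar-path-count}. That formula has three contributions. The term $\N(P_t,K_r)$ is $O(1)$. The terms with $j\ge2$ are of order $n^{t-3}$, which is inside the error. The $j=1$ term is $dR^Z_{t-1}(a,b)$. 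The main term is $\N(P_t,K_{a,b})$. So I will show
\[
        \N(P_t,S)=\N(P_t,T_{n-d,2})+dR^Z_{t-1}(a,b)+O(n^{t-3}),
\]
and then compare $\N(P_t,T_{n-d,2})$ with $\N(P_t,T_{n,2})$.

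\textbf{Even case $t=2q$.} Here $\N(P_{2q},K_{a,b})=(a)_q(b)_q$. When the part sizes change by $O(1)$ this equals $(ab)^q+O(n^{2q-2})$. For $m=n$ and $m=n-d$, the balanced Tur\'an graph has $ab=m^2/4+O(1)$, so
\[
        \N(P_{2q},T_{m,2})=(m/2)^{2q}+O(n^{2q-2}).
\]
Hence the main-term difference is
\[
        \N(P_{2q},T_{n-d,2})-\N(P_{2q},T_{n,2})=-\frac{2q\,d\,n^{2q-1}}{2^{2q}}+O(n^{2q-2}).
\]
The suspension term is $dR^Z_{2q-1}=d(b)_{q-1}(a-1)_{q-1}$ or its mirror image. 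Either way it equals $d(n/2)^{2q-2}+O(n^{2q-3})$, which is only $O(n^{2q-2})$. The total correction is $-qd\,n^{2q-1}/2^{2q-1}$, which matches \eqref{eq:path-even-extremal-asymp}.

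\textbf{Odd case $t=2q+1$.} Here $\N(P_{2q+1},K_{a,b})=\tfrac{a+b-2q}{2}(a)_q(b)_q=(m/2)^{2q+1}+O(n^{2q-1})$ with $m=a+b$. Again $ab=m^2/4+O(1)$, and the $-2q$ shift contributes only $O(n^{2q-1})$. The difference is then
\[
        -(2q+1)d\,n^{2q}/2^{2q+1}+O(n^{2q-1}).
\]
The suspension term is $dR^Z_{2q}=d(b)_q(a-1)_{q-1}=O(n^{2q-1})$, which is absorbed. This gives the stated formula.

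\textbf{Remaining checks and main obstacle.} The $\chi$-versions follow immediately because Proposition~\ref{prop:path-extremal} identifies both extremal values. The only real care point is error bookkeeping. I must confirm that replacing $(a)_q(b)_q$ by $(ab)^q$ and $ab$ by $m^2/4$ (including the parity floor) costs only one order below the main correction. I also need that the $j=1$ suspension term lies exactly at the error order, $n^{t-2}$, and not above it.
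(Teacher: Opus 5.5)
Your reduction is legitimate and takes a different route from the paper. You invoke Proposition~\ref{prop:path-extremal}, which is proved earlier and independently of this lemma, to identify $b^{\mathrm P}_{t,r}(n)=b^{\mathrm P,\chi}_{t,r}(n)$ with $\N(P_t,S)$ for one explicit $S=T_Z(a,b)$, and then expand \eqref{eq:Tstar-path-count}. The paper does not use the exact extremal structure. It sandwiches $b^{\mathrm P}_{t,r}(n)$ between $\N(P_t,T_{n-r+1,2})$, obtained from $\cT^*(r,n)$, and $\N(P_t,T_{n-r+1,2})+O(n^{t-2})$, obtained because every member of $\cG_{n,r+1}\setminus\cG_{n,r}$ has a core on $n-r+1$ vertices and the outside vertices lie in only $O(n^{t-2})$ copies. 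Your handling of the suspension terms is correct: $\N(P_t,K_r)=O(1)$, the $j=1$ term has order $n^{t-2}$, and the $j\ge2$ terms have order $n^{t-3}$.

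The gap is in the main-term expansion, exactly at the point you flagged, and the check you propose would fail. It is false that $\N(P_{2q},T_{m,2})=(m/2)^{2q}+O(n^{2q-2})$. Since $(a)_q(b)_q=\prod_{i=0}^{q-1}(ab-i(a+b)+i^2)=(ab)^q-\binom{q}{2}(a+b)(ab)^{q-1}+O(n^{2q-2})$, one has
\[
\N(P_{2q},T_{m,2})=\Bigl(\frac m2\Bigr)^{2q}-q(q-1)\Bigl(\frac m2\Bigr)^{2q-1}+O(m^{2q-2}).
\]
So replacing $(a)_q(b)_q$ by $(ab)^q$ costs order $n^{2q-1}$, the same order as the correction $qd\,n^{2q-1}/2^{2q-1}$ you want to isolate. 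Similarly, in the odd case the $-2q$ shift contributes $-q(a)_q(b)_q=\Theta(n^{2q})$, not $O(n^{2q-1})$. In fact $\N(P_{2q+1},T_{m,2})=(m/2)^{2q+1}-q^2(m/2)^{2q}+O(m^{2q-1})$. Your final formulas survive only because these second-order terms are fixed multiples of $m^{t-1}$, so they change by $O(n^{t-2})$ between $m=n$ and $m=n-d$.

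The argument must therefore be phrased through differences. Put $z=ab=m^2/4-\epsilon^2$ with $\epsilon\in\{0,\tfrac12\}$, and define $g_{2q}(m)=\prod_{i=0}^{q-1}(m^2/4-im+i^2)$ and $g_{2q+1}(m)=\tfrac{m-2q}{2}g_{2q}(m)$. Then $\N(P_t,T_{m,2})=g_t(m)+O(m^{t-2})$, where $g_t$ has degree $t$ and leading coefficient $2^{-t}$. Hence
\[
\N(P_t,T_{n,2})-\N(P_t,T_{n-d,2})=d\,g_t'(n)+O(n^{t-2})=dt\,n^{t-1}/2^t+O(n^{t-2}),
\]
which yields both formulas. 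This cancellation is what the paper's remark about keeping the next term in the expansion of $(x)_q$ refers to when deriving \eqref{eq:even-T-diff}.
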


\begin{proof}
We first record the complete bipartite path count needed here. If $K_{a,b}$ is complete bipartite, then, for every $t\ge2$,
\[
        \N(P_t,K_{a,b})
        =\frac{(a)_{\lceil t/2\rceil}(b)_{\lfloor t/2\rfloor}
        +(a)_{\lfloor t/2\rfloor}(b)_{\lceil t/2\rceil}}{2}.
\]
Indeed, an ordered alternating path starting in the part of size $a$ can be chosen in $(a)_{\lceil t/2\rceil}(b)_{\lfloor t/2\rfloor}$ ways, and the analogous number with $a$ and $b$ reversed counts those starting in the other part. Each unlabeled path is counted twice, once in each direction. Thus $\N(P_{2q},K_{a,b})=(a)_q(b)_q$, and
\[
        \N(P_{2q+1},K_{a,b})
        =\frac{(a)_{q+1}(b)_q+(b)_{q+1}(a)_q}{2}
        =\frac{a+b-2q}{2}(a)_q(b)_q.
\]
With the two parts of $T_{n,2}$ of sizes $n/2+O(1)$ and $n/2+O(1)$, these formulas give, for every fixed integer $d$,
\begin{align}
        \N(P_{2q},T_{n,2})-\N(P_{2q},T_{n-d,2})&=\frac{qd}{2^{2q-1}}n^{2q-1}+O(n^{2q-2}),\label{eq:even-T-diff}\\
        \N(P_{2q+1},T_{n,2})-\N(P_{2q+1},T_{n-d,2})&=\frac{(2q+1)d}{2^{2q+1}}n^{2q}+O(n^{2q-1}).\label{eq:odd-T-diff}
\end{align}
For the even case, one keeps the next term in the expansion of $(x)_q$; the odd case follows from $\N(P_{2q+1},T_{m,2})=m^{2q+1}/2^{2q+1}+O(m^{2q})$ with $m=n$ and $m=n-d$.

Every graph in $\cG_{n,r+1}\setminus\cG_{n,r}$ can be written with exactly $r-1$ vertices outside a bipartite core, so the core has $n-r+1$ vertices. For $P_{2q}$, the number of copies using at least one outside vertex is $O(n^{2q-2})$: after choosing one outside vertex, the copy must also contain one of the $O(1)$ suspension vertices through which it meets the core, leaving at most $2q-2$ vertices to choose freely from the core. The copies entirely in the core are maximized by completing the core to a balanced complete bipartite graph. Hence $b^{\mathrm P}_{2q,r}(n)=\N(P_{2q},T_{n-r+1,2})+O(n^{2q-2})$, and \eqref{eq:path-even-extremal-asymp} follows from \eqref{eq:even-T-diff} with $d=r-1$.

For $P_{2q+1}$ the same argument gives an outside contribution $O(n^{2q-1})$, and therefore $b^{\mathrm P}_{2q+1,r}(n)=\N(P_{2q+1},T_{n-r+1,2})+O(n^{2q-1})$. Now apply \eqref{eq:odd-T-diff} with $d=r-1$.

The lower bounds for the high-chromatic quantities come from the corresponding graphs $T\in\cT^*(r,n)$, which satisfy $\chi(T)=r$; the upper bounds follow from the formulas just proved without the chromatic restriction.
\end{proof}

\begin{lemma}\label{lem:cycle-extremal-asymp}
For fixed $a\ge2$ and $r\ge3$,
\[
        b^{\circ}_{a,r}(n)=b^{\circ,\chi}_{a,r}(n)
        =\N(C_{2a},T_{n,2})-\frac{r-1}{2^{2a}}n^{2a-1}+O(n^{2a-2}).
\]
Moreover, for every $T\in \cT^*(r,n)$,
\[
        b^{\circ,\chi}_{a,r}(n)=\N(C_{2a},T).
\]
\end{lemma}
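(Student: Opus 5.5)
The plan is to reduce everything to Proposition~\ref{prop:cycle-extremal} and then carry out an explicit expansion. By that proposition, for all sufficiently large $n$ we have $b^{\circ}_{a,r}(n)=b^{\circ,\chi}_{a,r}(n)=\N(C_{2a},T)$ for every $T\in\cT^*(r,n)$. This already gives the second assertion. It also reduces the first assertion to computing $\N(C_{2a},T)$ for one fixed $T\in\cT^*(r,n)$.

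Such a $T$ is $T_{n-r+1,2}$ with a $K_r$ suspended at one vertex. The core and the clique share only a cut vertex, so every cycle lies in one of them. Hence
\[
        \N(C_{2a},T)=\N(C_{2a},T_{n-r+1,2})+\N(C_{2a},K_r),
\]
and the second term is a constant depending only on $a$ and $r$, so it is $O(1)$. It therefore suffices to show that, for every fixed integer $d$,
\[
        \N(C_{2a},T_{n,2})-\N(C_{2a},T_{n-d,2})=\frac{d}{2^{2a}}n^{2a-1}+O(n^{2a-2}),
\]
and then to apply this with $d=r-1$.

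For this estimate we use $\N(C_{2a},K_{x,y})=(x)_a(y)_a/(2a)$, as in the proof of Proposition~\ref{prop:cycle-extremal}. For $T_{m,2}$ the parts are $x,y=m/2+O(1)$ with $x+y=m$. Expanding each falling factorial gives $(x)_a=x^a+O(m^{a-1})$ and similarly for $y$. Since $xy=m^2/4-O(1)$, the product satisfies $(x)_a(y)_a=(m/2)^{2a}+O(m^{2a-1})$.

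Here lies the one point needing care. This crude expansion only controls $\N(C_{2a},T_{m,2})$ up to $O(m^{2a-1})$, which is exactly the order of the difference we want. The fix is to compare $m=n$ with $m=n-d$ and track the coefficient of $m^{2a-1}$ explicitly.

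Write $(x)_a(y)_a=(xy)^a-\binom a2(x^{a-1}y^a+x^ay^{a-1})+O(m^{2a-2})$. The subtracted term equals $\binom a2 (xy)^{a-1}m$. Using $xy=m^2/4+O(1)$, this gives
\[
        \N(C_{2a},T_{m,2})=\frac{1}{2a}\left(\frac{m^{2a}}{4^a}-\binom a2\frac{m^{2a-1}}{4^{a-1}}\right)+O(m^{2a-2}).
\]
The coefficient of $m^{2a-1}$ is a fixed constant, so its contribution to the difference between $m=n$ and $m=n-d$ is $O(n^{2a-2})$. The leading term contributes
\[
        \frac{1}{2a\,4^a}\bigl(n^{2a}-(n-d)^{2a}\bigr)=\frac{2a\,d}{2a\,4^a}n^{2a-1}+O(n^{2a-2})=\frac{d}{2^{2a}}n^{2a-1}+O(n^{2a-2}).
\]
The parity-dependent $O(1)$ corrections in $xy$ only affect terms of order $m^{2a-2}$. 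Combining this with the $O(1)$ contribution from $K_r$ and setting $d=r-1$ yields the claimed formula.
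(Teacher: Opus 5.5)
Your proposal is correct and takes essentially the paper's approach: Proposition~\ref{prop:cycle-extremal} identifies $b^{\circ}_{a,r}(n)=b^{\circ,\chi}_{a,r}(n)$ with $\N(C_{2a},T)=\N(C_{2a},T_{n-r+1,2})+O(1)$, and the difference formula for $\N(C_{2a},T_{m,2})$ is then applied with $d=r-1$. Your explicit expansion also derives that difference formula, which the paper only asserts: you show the $m^{2a-1}$ coefficient does not depend on parity, so it cancels to $O(n^{2a-2})$.
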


\begin{proof}
We use the following cycle counts. If $K_{x,y}$ is complete bipartite, then $\N(C_{2a},K_{x,y})=(x)_a(y)_a/(2a)$. Indeed, an ordered alternating $2a$-cycle in $K_{x,y}$ can be selected in $(x)_a(y)_a$ ways after fixing which side contains the first vertex. Each unlabeled cycle is counted $2a$ times by cyclic shifts and twice by reversal. Hence, for every fixed integer $d$,
\[
        \N(C_{2a},T_{n,2})-\N(C_{2a},T_{n-d,2})
        =\frac{d}{2^{2a}}n^{2a-1}+O(n^{2a-2}).
\]
Also, writing $K_u$ for the complete graph on $u$ vertices, the standard count of cyclic orderings gives $\N(C_{2a},K_u)=(u)_{2a}/(4a)$.

By Proposition~\ref{prop:cycle-extremal}, the extremal core is $T_{n-r+1,2}$, and the suspended part contributes only $O(1)$ copies of $C_{2a}$. Thus $b^{\circ}_{a,r}(n)=\N(C_{2a},T_{n-r+1,2})+O(1)$, and the preceding difference formula with $d=r-1$ gives the asserted asymptotic formula. The equality with the high-chromatic extremal value and the final assertion are also part of Proposition~\ref{prop:cycle-extremal}.
\end{proof}

\section{\texorpdfstring{Proof of Theorem~\ref{thm:local-counting}}{Proof of the counting theorem}}\label{sec:stability}

We now prove Theorem~\ref{thm:local-counting}. Let $X\cup Y$ be the bipartition of $H$, with $|X|=x$ and $|Y|=y$, let $s=x+y$, and define
\[
        \eta=t_{s,2}-xy,\qquad \mu=xy-e(H).
\]
Thus $\eta$ measures the loss caused by the imbalance of the complete bipartite graph $K_{x,y}$, while $\mu$ measures the number of missing cross-edges of $H$ inside $K_{x,y}$. We have $\eta+\mu=t_{s,2}-e(H)$.

\begin{proof}[Proof of Theorem~\ref{thm:local-counting}]
Write the colour classes of $F$ as in \eqref{eq:admissible-sizes}, and fix a matching $M$ of size $k$ saturating $B$. Since $H\subseteq K_{x,y}$, we have $e(H)\le xy$. Hence
\[
        0\le \eta=t_{s,2}-xy\le t_{s,2}-e(H)\le C_0s,
        \qquad
        0\le \mu\le C_0s.
\]
As $x+y=s$ and $t_{s,2}=\lfloor s^2/4\rfloor$, we also have
\[
        t_{s,2}-xy=\frac{(x-y)^2}{4}+O(1).
\]
Thus $|x-y|=O_{C_0}(s^{1/2})$, and in particular
\begin{equation}\label{eq:xy-near-balanced}
        x,y=\frac{s}{2}+O_{C_0}(s^{1/2}).
\end{equation}

\medskip
\noindent\emph{\textbf{Step 1: the loss caused by imbalance.}}
Since $F$ is connected and bipartite, every embedding of $F$ into $K_{x,y}$ sends its two colour classes to the two parts of $K_{x,y}$, in one of the two possible orders. Therefore
\begin{equation}
        \N(F,K_{x,y})=
        \frac{(x)_{k+\nu}(y)_k+(x)_k(y)_{k+\nu}}{|\Aut(F)|}.
        \label{eq:complete-F-count}
\end{equation}
Let $z=xy$ and $z_0=t_{s,2}$. For fixed $s$, the right-hand side of \eqref{eq:complete-F-count} is a polynomial $P_s(z)$ in $z$. This uses the relation $x+y=s$: every symmetric polynomial in $x$ and $y$ with $s$ fixed can be written as a polynomial in $xy$. Concretely,
\[
        (x)_k(y)_k=\prod_{i=0}^{k-1}(x-i)(y-i)
        =\prod_{i=0}^{k-1}(z-is+i^2).
\]
If $\nu=0$, then
\[
        P_s(z)=\frac{2}{|\Aut(F)|}z^k+R_s(z).
\]
If $\nu=1$, then
\[
        (x)_{k+1}(y)_k+(x)_k(y)_{k+1}
        =(s-2k)(x)_k(y)_k,
\]
and hence
\[
        P_s(z)=\frac{s}{|\Aut(F)|}z^k+R_s(z).
\]
In the range $z=z_0+O(s)$, the expansion above gives
\begin{equation}\label{eq:Ps-derivative-bounds}
        R_s'(z)=O_F(s^{v(F)-3}),\qquad
        P_s''(z)=O_F(s^{v(F)-4}),
\end{equation}
with the convention that the second bound is zero when $P_s$ has degree at most one. This follows because every non-leading term in the product loses at least one power of $z$ and gains at most a fixed power of $s$, while $z=\Theta(s^2)$ in the range under consideration.

Since $z=z_0-\eta$ and $\eta=O_{C_0}(s)$, Taylor expansion at $z_0$ gives
\[
        P_s(z)=P_s(z_0)-\eta P_s'(z_0)+O_{F,C_0}(s^{v(F)-2}),
\]
where the error term comes from \eqref{eq:Ps-derivative-bounds}. The value $P_s(z_0)$ is $\N(F,T_{s,2})$. Moreover,
\[
        P_s'(z_0)=
        \begin{cases}
        \dfrac{2k}{|\Aut(F)|}\left(\dfrac{s^2}{4}\right)^{k-1}+O_F(s^{v(F)-3}), & \nu=0,\\[1.4ex]
        \dfrac{k}{|\Aut(F)|}s\left(\dfrac{s^2}{4}\right)^{k-1}+O_F(s^{v(F)-3}), & \nu=1.
        \end{cases}
\]
Using the definition of $\kappa_F$, we obtain
\begin{equation}
        \N(F,K_{x,y})\le \N(F,T_{s,2})
        -\kappa_F\eta s^{v(F)-2}+O_{F,C_0}(s^{v(F)-2}).
        \label{eq:general-graph-loss}
\end{equation}

\medskip
\noindent\emph{\textbf{Step 2: the loss caused by missing cross-edges.}}
Let $D=E(K_{x,y})\setminus E(H)$, so $|D|=\mu$. For the assignment $A\to X$, $B\to Y$, let $\mathcal C_{X,Y}(J)$ be the number of injective maps $\phi:V(F)\to X\cup Y$ with $\phi(A)\subseteq X$ and $\phi(B)\subseteq Y$ such that every edge of the fixed matching $M$ is mapped to an edge of $J$, where $J\subseteq K_{x,y}$. Define $\mathcal C_{Y,X}(J)$ analogously.

The counts $\mathcal C_{X,Y}(H)$ and $\mathcal C_{Y,X}(H)$ may include maps which are not copies of $F$ in $H$, because only the matching edges are checked. This relaxation is deliberate: the matching-admissibility assumption ensures that a fixed matching saturates the smaller colour class, so the first-order loss caused by missing cross-edges is already detected by the images of these matching edges. Every labelled copy of $F$ in $H$ is counted in one of these two quantities. Hence
\begin{equation}\label{eq:C-upper-N}
        |\Aut(F)|\N(F,H)
        \le \mathcal C_{X,Y}(H)+\mathcal C_{Y,X}(H).
\end{equation}
For $J=K_{x,y}$, equality holds after summing the two assignments, because every injective map sending the two colour classes of $F$ to the two parts of $K_{x,y}$ is an embedding of $F$.

Fix the assignment $A\to X$, $B\to Y$. For a missing edge $e\in D$, let $\mathcal U_e$ be the set of maps counted by $\mathcal C_{X,Y}(K_{x,y})$ for which some edge of $M$ is mapped to $e$. Then
\[
        \mathcal C_{X,Y}(H)
        =\mathcal C_{X,Y}(K_{x,y})-
        \left|\bigcup_{e\in D}\mathcal U_e\right|.
\]
If $e=uv$ with $u\in X$ and $v\in Y$, then
\[
        |\mathcal U_e|=
        \begin{cases}
        k(x-1)_{k-1}(y-1)_{k-1}, & \nu=0,\\[0.8ex]
        k(x-1)_k(y-1)_{k-1}, & \nu=1.
        \end{cases}
\]
Indeed, one first chooses the matching edge of $M$ mapped to $e$. When $\nu=0$, all vertices of $F$ are covered by the matching, so the remaining $k-1$ vertices of $A$ and the remaining $k-1$ vertices of $B$ are placed injectively in $X\setminus\{u\}$ and $Y\setminus\{v\}$. When $\nu=1$, there is one additional unmatched vertex in $A$, so one places $k$ remaining vertices of $A$ and $k-1$ remaining vertices of $B$.

We now bound the overcount in the union. If $k=1$, no map can use two distinct missing edges as images of matching edges. Suppose $k\ge2$ and take two distinct missing edges $e,f\in D$. If they share an endpoint, then $\mathcal U_e\cap\mathcal U_f=\emptyset$, since the matching edges of $M$ are vertex-disjoint and the map is injective. If they are vertex-disjoint, then after fixing two matching edges and their images, at most $v(F)-4$ vertices remain to be placed. Hence
\[
        |\mathcal U_e\cap\mathcal U_f|=O_F(s^{v(F)-4}).
\]
Since $\mu=O_{C_0}(s)$, the sum of all pairwise intersections is $O_{F,C_0}(s^{v(F)-2})$. Inclusion-exclusion therefore gives
\begin{align*}
\mathcal C_{X,Y}(H)
&\le \mathcal C_{X,Y}(K_{x,y})
-k\mu(x-1)_{k-1}(y-1)_{k-1}
+O_{F,C_0}(s^{2k-2}), && \nu=0,\\
\mathcal C_{X,Y}(H)
&\le \mathcal C_{X,Y}(K_{x,y})
-k\mu(x-1)_k(y-1)_{k-1}
+O_{F,C_0}(s^{2k-1}), && \nu=1.
\end{align*}
The same estimates hold with $x$ and $y$ interchanged for the assignment $A\to Y$, $B\to X$.

It remains to identify the leading coefficient after the two assignments are added. For $k\ge2$,
\[
        (x-1)_{k-1}(y-1)_{k-1}
        =\prod_{i=1}^{k-1}(z-is+i^2)
        =\left(\frac{s^2}{4}\right)^{k-1}+O_{F,C_0}(s^{2k-3}),
\]
and for $k=1$ the same expression is the empty product $1=(s^2/4)^0$. Thus, when $\nu=0$, the total leading loss from the two assignments is
\[
        2k\mu\left(\frac{s^2}{4}\right)^{k-1}
        +O_{F,C_0}(s^{2k-2}).
\]
When $\nu=1$, we use
\[
\begin{aligned}
        &(x-1)_k(y-1)_{k-1}+(y-1)_k(x-1)_{k-1} \\
        &\qquad =\bigl((x-k)+(y-k)\bigr)(x-1)_{k-1}(y-1)_{k-1} \\
        &\qquad =(s-2k)(x-1)_{k-1}(y-1)_{k-1}.
\end{aligned}
\]
Together with \eqref{eq:xy-near-balanced}, this gives total leading loss
\[
        k\mu s\left(\frac{s^2}{4}\right)^{k-1}
        +O_{F,C_0}(s^{2k-1}).
\]
Combining these estimates with \eqref{eq:C-upper-N} and dividing by $|\Aut(F)|$, we get
\begin{equation}
        \N(F,H)\le \N(F,K_{x,y})
        -\kappa_F\mu s^{v(F)-2}+O_{F,C_0}(s^{v(F)-2}).
        \label{eq:general-missing-loss}
\end{equation}

\medskip
\noindent\emph{\textbf{Step 3: combining the two defects.}}
Combining \eqref{eq:general-missing-loss} with the imbalance estimate \eqref{eq:general-graph-loss} for $\N(F,K_{x,y})$ yields
\[
        \N(F,H)\le \N(F,T_{s,2})
        -\kappa_F(\eta+\mu)s^{v(F)-2}+O_{F,C_0}(s^{v(F)-2}).
\]
Since $\eta+\mu=t_{s,2}-e(H)$, this is exactly the desired estimate.
\end{proof}

\section{Counting bounds for \texorpdfstring{$C_{2\ell+1}$}{C2l+1}-free graphs}\label{sec:counting-bounds}

We record the counting bounds used later. The first extends Theorem~\ref{thm:local-counting} to $C_{2\ell+1}$-free graphs with $t_{n,2}-O(n)$ edges.

\begin{proposition}\label{prop:lifting}
Let $F$ be a fixed matching-admissible connected bipartite graph, with coefficient $\kappa_F$ as in \eqref{eq:kappa-F}. Fix $\ell\ge2$ and $A>0$. There is a constant $C=C(F,\ell,A)$ such that the following holds for all sufficiently large $n$. If $G$ is an $n$-vertex $C_{2\ell+1}$-free graph with $e(G)\ge t_{n,2}-An$, then
\[
        \N(F,G)\le \N(F,T_{n,2})-
        \kappa_F(t_{n,2}-e(G))n^{v(F)-2}+Cn^{v(F)-2}.
\]
\end{proposition}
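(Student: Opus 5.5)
The plan is to reduce to Theorem~\ref{thm:local-counting} using the structure given by Lemma~\ref{lem:zlp-core}. Since $e(G)\ge t_{n,2}-An\ge (n-c)^2/4$ for a constant $c=c(A)$ (take $c\ge 2A+2$), Lemma~\ref{lem:zlp-core} yields a set $W$ with $H=G[W]$ bipartite, $|W|=s\ge n-2c$, and every vertex of $U=V(G)\setminus W$ having fewer than $11c$ neighbours in $W$. Write $u=|U|\le 2c$. Then
\[
e(G)\le e(H)+11cu+\binom{u}{2},
\]
so $e(H)\ge e(G)-O(1)$. In particular $t_{s,2}-e(H)\le t_{n,2}-t_{s,2}+An+O(1)=O(n)$, and Theorem~\ref{thm:local-counting} applies to $H$ with some $C_0=C_0(A,\ell)$.

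Next I will bound the copies of $F$ that meet $U$. Such a copy contains some vertex $w\in U$. Since $F$ is connected and $U$ is bounded, the copy, minus its vertices in $U$, is a set of $W$-vertices, at least one of which is adjacent to a vertex of $U$ (unless the copy lies entirely in $U$, which gives $O(1)$ copies). Counting vertex by vertex along a spanning tree rooted in $U$, each step into $W$ from $U$ has $O(1)$ choices, each step inside $W$ at most $n$ choices, and steps from $W$ into $U$ have $O(1)$ choices. Hence there are $O(n^{v(F)-2})$ such copies provided at least two steps are constrained. If the copy uses exactly one vertex $w$ of $U$, then $w$ and one $W$-neighbour cost $O(1)$, giving $O(n^{v(F)-2})$. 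Thus
\[
\N(F,G)=\N(F,H)+O(n^{v(F)-2}).
\]

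Now apply Theorem~\ref{thm:local-counting}:
\[
\N(F,H)\le \N(F,T_{s,2})-\kappa_F(t_{s,2}-e(H))s^{v(F)-2}+O(s^{v(F)-2}).
\]
The main obstacle is converting this to the $n$-vertex statement with the same main coefficient. For that I need
\[
\N(F,T_{n,2})-\N(F,T_{s,2})\ge \kappa_F(t_{n,2}-t_{s,2})n^{v(F)-2}-O(n^{v(F)-2}).
\]
I plan to get this from the derivative computation in Step~1. Adding a single vertex to $T_{m,2}$, in the smaller part, creates all copies of $F$ through that vertex. Their number is, to leading order, $P'$-type: about $\kappa_F\cdot\frac m2\cdot m^{v(F)-2}$ new copies, which matches $\kappa_F\bigl(t_{m+1,2}-t_{m,2}\bigr)m^{v(F)-2}$ up to $O(m^{v(F)-2})$. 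Summing over the $u=O(1)$ steps from $s$ to $n$ gives the displayed inequality, with error $O(n^{v(F)-2})$ since $u$ is bounded.

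Finally, I combine the bounds. We have
\[
t_{n,2}-e(G)=(t_{n,2}-t_{s,2})+(t_{s,2}-e(H))-O(1)\cdot(\text{edges at }U).
\]
The edges at $U$ contribute $O(1)$ and hence only $O(n^{v(F)-2})$ to the bound. Also $s^{v(F)-2}=n^{v(F)-2}-O(n^{v(F)-3})$ and $t_{s,2}-e(H)=O(n)$, so replacing $s^{v(F)-2}$ by $n^{v(F)-2}$ costs $O(n^{v(F)-2})$. Together these give the stated inequality with some $C=C(F,\ell,A)$.
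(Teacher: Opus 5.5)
Your overall plan is the paper's. You apply Lemma~\ref{lem:zlp-core}, discard the $O(1)$ edges meeting $U$, apply Theorem~\ref{thm:local-counting} to $H$, compare $T_{s,2}$ with $T_{n,2}$, and combine. The only divergence is the comparison step, and there your justification contains a false claim.

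For $\nu=0$, adding one vertex to the smaller part of $T_{m,2}$ does create $\kappa_F\lceil m/2\rceil m^{v(F)-2}+O(m^{v(F)-2})$ new copies. For $\nu=1$ it does not. Here $\N(F,K_{x,y})=(x+y-2k)(x)_k(y)_k/|\Aut(F)|$ carries an explicit factor $x+y$, which also grows when a vertex is added. The Step~1 derivative does not see this, because it is taken in $z=xy$ with $s$ fixed. The gain is therefore $\frac{2k+1}{|\Aut(F)|}(m/2)^{2k}+O(m^{2k-1})$, whereas $\kappa_F\lceil m/2\rceil m^{v(F)-2}=\frac{2k}{|\Aut(F)|}(m/2)^{2k}+O(m^{2k-1})$. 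For $F=P_3$ this is $3m^2/8$ against $m^2/4$, matching the coefficient $t$ rather than $t-1$ in \eqref{eq:odd-T-diff}. So the two quantities differ by $\Theta(m^{v(F)-1})$, not by $O(m^{v(F)-2})$ as you assert.

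This does not sink the proof, because the discrepancy has the favourable sign. You only need the lower bound $\N(F,T_{n,2})-\N(F,T_{s,2})\ge\kappa_F(t_{n,2}-t_{s,2})n^{v(F)-2}-O(n^{v(F)-2})$. Replacing ``matches'' by ``is at least'' and summing over the $n-s=O(1)$ added vertices repairs the step.

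The paper avoids the computation altogether by applying Theorem~\ref{thm:local-counting} a second time. It uses the spanning subgraph $J\subseteq T_{n,2}$ consisting of a copy of $T_{s,2}$ together with $n-s$ isolated vertices. Since $e(J)=t_{s,2}$, $t_{n,2}-e(J)=O(n)$, and $\N(F,J)=\N(F,T_{s,2})$ ($F$ is connected and has an edge), the theorem gives exactly the inequality you need, uniformly in $\nu$.
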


\begin{proof}
Choose an integer $c=c(A)$ large enough that $e(G)\ge(n-c)^2/4$ for all sufficiently large $n$. By Lemma~\ref{lem:zlp-core}, there is a set $W\subseteq V(G)$ such that $H=G[W]$ is bipartite, $s=|W|=n-O_c(1)$, and each vertex outside $W$ has fewer than $11c$ neighbors in $W$. Since $|V(G)\setminus W|=O_c(1)$, the number of edges from $V(G)\setminus W$ to $W$ is $O_c(1)$, and the number of edges inside $V(G)\setminus W$ is also $O_c(1)$. Thus only $O_c(1)$ edges of $G$ are incident with $V(G)\setminus W$. Every fixed edge lies in at most $O_F(n^{v(F)-2})$ copies of $F$, so $\N(F,G)\le \N(F,H)+O_{F,c}(n^{v(F)-2})$.
Then $e(H)=e(G)-O_c(1)$ and $t_{s,2}-e(H)=O_{A,c}(s)$. Theorem~\ref{thm:local-counting} gives
\begin{equation}\label{eq:lifting-H-bound}
        \N(F,H)\le \N(F,T_{s,2})-
        \kappa_F(t_{s,2}-e(H))s^{v(F)-2}+O_{F,A,c}(s^{v(F)-2}).
\end{equation}
Since $s=n-O_c(1)$, we have
\begin{equation}\label{eq:lifting-defect-relation}
        t_{s,2}-e(H)=(t_{n,2}-e(G))-(t_{n,2}-t_{s,2})+O_c(1),
        \qquad t_{n,2}-t_{s,2}=O_c(n).
\end{equation}
Let $J$ be the spanning subgraph of $T_{n,2}$ obtained by keeping a copy of $T_{s,2}$ and deleting all edges incident with the remaining $n-s$ vertices. Then $e(J)=t_{s,2}$ and $\N(F,J)=\N(F,T_{s,2})$. Applying Theorem~\ref{thm:local-counting} to $J\subseteq T_{n,2}$ gives
\begin{equation}\label{eq:lifting-Ts-bound}
        \N(F,T_{s,2})\le \N(F,T_{n,2})-
        \kappa_F(t_{n,2}-t_{s,2})n^{v(F)-2}+O_{F,c}(n^{v(F)-2}).
\end{equation}
Since $s=n-O_c(1)$ and $t_{s,2}-e(H)=O_{A,c}(n)$, replacing $s^{v(F)-2}$ by $n^{v(F)-2}$ in \eqref{eq:lifting-H-bound} changes the right-hand side by at most $O_{F,A,c}(n^{v(F)-2})$. Combining \eqref{eq:lifting-H-bound} and \eqref{eq:lifting-Ts-bound}, and using \eqref{eq:lifting-defect-relation}, proves the proposition.
\end{proof}

\begin{corollary}\label{cor:global-defects}
Fix $\ell\ge2$ and $A>0$. For all sufficiently large $n$ the following hold.
\begin{enumerate}[label=\textup{(\roman*)}]
\item If $t\ge4$ and $G$ is an $n$-vertex $C_{2\ell+1}$-free graph with $e(G)\ge t_{n,2}-An$, then
\[
        \N(P_t,G)\le \N(P_t,T_{n,2})-
        \frac{\lfloor t/2\rfloor}{2^{t-2}}(t_{n,2}-e(G))n^{t-2}
        +O_{\ell,t,A}(n^{t-2}).
\]
\item If $a\ge2$ and $G$ is an $n$-vertex $C_{2\ell+1}$-free graph with $e(G)\ge t_{n,2}-An$, then
\[
        \N(C_{2a},G)\le \N(C_{2a},T_{n,2})-
        \frac{1}{2^{2a-1}}(t_{n,2}-e(G))n^{2a-2}+O_{\ell,a,A}(n^{2a-2}).
\]
\end{enumerate}
\end{corollary}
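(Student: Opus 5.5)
This follows directly from Proposition~\ref{prop:lifting}. It suffices to check that $P_t$ and $C_{2a}$ are matching-admissible connected bipartite graphs and to compute $\kappa_F$ from \eqref{eq:kappa-F} in each case. The constant $C(F,\ell,A)$ of the proposition then becomes the $O_{\ell,t,A}(n^{t-2})$ or $O_{\ell,a,A}(n^{2a-2})$ term, since $v(P_t)=t$ and $v(C_{2a})=2a$.

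\emph{Paths.} For $t=2q$, the colour classes of $P_{2q}$ both have size $q$, so $k=q$ and $\nu=0$. The alternate edges $v_1v_2,v_3v_4,\ldots,v_{2q-1}v_{2q}$ form a perfect matching saturating $B$. Since $|\Aut(P_t)|=2$, we get $\kappa=2q/(2\cdot2^{2q-2})=q/2^{t-2}$.

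For $t=2q+1$, the larger class (the endpoints' class) has $q+1$ vertices and the smaller has $q$, so $k=q$ and $\nu=1$. The matching $v_1v_2,\ldots,v_{2q-1}v_{2q}$ saturates the smaller class. This gives $\kappa=q/(2\cdot 2^{2q-2})=q/2^{2q-1}=q/2^{t-2}$.

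In both cases $\kappa_{P_t}=\lfloor t/2\rfloor/2^{t-2}$, and $P_t$ is connected. This is (i).

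\emph{Even cycles.} $C_{2a}$ is connected and bipartite with classes of size $a$, so $k=a$ and $\nu=0$. Alternate edges give a perfect matching. Since $|\Aut(C_{2a})|=4a$, we get $\kappa=2a/(4a\cdot2^{2a-2})=1/2^{2a-1}$. This is (ii).

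There is no real obstacle here. The only points to check are that $|\Aut|$ is $2$ for paths with $t\ge4$ and $4a$ for $C_{2a}$ with $a\ge2$, and that the labelling in \eqref{eq:admissible-sizes} puts the larger class as $A$ when $t$ is odd. The hypothesis $e(G)\ge t_{n,2}-An$ and $C_{2\ell+1}$-freeness match those of Proposition~\ref{prop:lifting} exactly.
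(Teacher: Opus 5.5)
Your proposal is correct and follows the paper's proof: both apply Proposition~\ref{prop:lifting} with $F=P_t$ and $F=C_{2a}$, and compute $\kappa_F$ from \eqref{eq:kappa-F} using $|\Aut(P_t)|=2$ and $|\Aut(C_{2a})|=4a$. You also verify matching-admissibility explicitly, including labelling the larger class as $A$ when $t$ is odd; the paper leaves this check implicit.
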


\begin{proof}
Apply Proposition~\ref{prop:lifting}. For the path bound, take $F=P_t$. Then $v(F)=t$, $|\Aut(F)|=2$, and \eqref{eq:kappa-F} gives $\kappa_F=\lfloor t/2\rfloor/2^{t-2}$. For the cycle bound, take $F=C_{2a}$; here $k=a$, $\nu=0$ and $|\Aut(F)|=4a$, so $\kappa_F=1/2^{2a-1}$.
\end{proof}

The preceding bounds apply only after one knows that the graph has $t_{n,2}-O(n)$ edges. The following lemma gives this initial information from a near-extremal number of path or even-cycle copies.

\begin{lemma}\label{lem:spectral-init}
Let $\ell\ge2$ and $K>0$ be fixed.
\begin{enumerate}[label=\textup{(\roman*)}]
\item For every $t\ge4$ there is a constant $A=A(\ell,t,K)$ such that every $n$-vertex $C_{2\ell+1}$-free graph $G$ satisfying
\[
        \N(P_t,G)\ge \frac{n^t}{2^t}-Kn^{t-1}
\]
has $e(G)\ge t_{n,2}-An$ for all sufficiently large $n$.
\item For every $a\ge2$ there is a constant $A=A(\ell,a,K)$ such that every $n$-vertex $C_{2\ell+1}$-free graph $G$ satisfying
\[
        \N(C_{2a},G)\ge \frac{n^{2a}}{2a\,2^{2a}}-Kn^{2a-1}
\]
has $e(G)\ge t_{n,2}-An$ for all sufficiently large $n$.
\end{enumerate}
\end{lemma}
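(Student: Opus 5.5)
The plan is to bound the number of copies of $P_t$ or $C_{2a}$ from above by a function of $e(G)$ alone, using the spectral radius. Then the near-extremal count forces $e(G)$ to be within $O(n)$ of $t_{n,2}$. Let $m=e(G)$ and $\lambda=\lambda(G)$. For paths, the number of walks on $t$ vertices is $\mathbf 1^{T}A^{t-1}\mathbf 1\le n\lambda^{t-1}$. Every copy of $P_t$ gives two such walks (one per direction), so $\N(P_t,G)\le \frac12 n\lambda^{t-1}$. For cycles, every copy of $C_{2a}$ gives $4a$ closed walks of length $2a$. Hence $\N(C_{2a},G)\le \frac{1}{4a}\operatorname{tr}(A^{2a})$. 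Since $2a\ge4$ and $\sum_i\lambda_i^2=2m$, this trace is at most $\lambda^{2a-2}\sum_i\lambda_i^2=2m\lambda^{2a-2}$. By Lemma~\ref{lem:zls}, $\lambda\le\sqrt m+\ell$.

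For (i), combine the two bounds to get
\[
\frac{n^t}{2^t}-Kn^{t-1}\le \tfrac12 n(\sqrt m+\ell)^{t-1}.
\]
Write $\sqrt m=\tfrac n2-\delta$; since $m\le n^2/4$ for large $n$ by F\"uredi--Gunderson, we have $\delta\ge0$. Then $(\sqrt m+\ell)^{t-1}=(n/2)^{t-1}\bigl(1-\tfrac{2(\delta-\ell)}{n}\bigr)^{t-1}$. If $\delta>\ell$, this is at most $(n/2)^{t-1}\bigl(1-\tfrac{2(\delta-\ell)}{n}\bigr)$. Comparing with the left-hand side yields
\[
\frac{n^{t-1}}{2^{t-1}}\cdot\frac{2(\delta-\ell)}{n}\le Kn^{t-1},
\]
so $\delta\le \ell+2^{t-2}K=O(1)$. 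Hence $m\ge(n/2-O(1))^2=n^2/4-O(n)\ge t_{n,2}-An$.

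For (ii), the same computation applies to
\[
\frac{n^{2a}}{2a\,2^{2a}}-Kn^{2a-1}\le \frac{2m(\sqrt m+\ell)^{2a-2}}{4a}\le\frac{(\sqrt m+\ell)^{2a}}{2a},
\]
which gives $(\sqrt m+\ell)^{2a}\ge (n/2)^{2a}-2aKn^{2a-1}$. The same first-order argument then gives $\sqrt m\ge n/2-O(1)$.

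The main obstacle is only bookkeeping: making sure the walk/trace inequalities carry the right constants (the factor $\tfrac12 n$ versus $n^t/2^t$), so that the leading terms match exactly. Once they match, the additive $\ell$ in Lemma~\ref{lem:zls} and the $Kn^{t-1}$ slack each cost only $O(1)$ in $\sqrt m$, which is $O(n)$ in $m$. If one prefers not to invoke F\"uredi--Gunderson, $\delta<0$ is harmless, since it only helps the conclusion.
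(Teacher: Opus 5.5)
Your proof is correct and follows the paper's argument: the same walk bound $2\N(P_t,G)\le n\lambda^{t-1}$, the same trace bound $4a\N(C_{2a},G)\le 2e(G)\lambda^{2a-2}$, and Lemma~\ref{lem:zls}. Your $\delta$-expansion just makes explicit the paper's ``take $(t-1)$st roots'' step, and your bound $2m\le 2(\sqrt m+\ell)^2$ reduces (ii) to the same one-variable inequality. There is one harmless slip in (i): the loss term should be $\frac{n^t}{2^t}\cdot\frac{2(\delta-\ell)}{n}=\frac{n^{t-1}}{2^{t-1}}(\delta-\ell)$, which gives $\delta\le\ell+2^{t-1}K$; as literally displayed, your inequality would only yield $\delta-\ell=O(n)$, but the intended conclusion $\delta=O(1)$ is right.
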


\begin{proof}
Let $A_G$ be the adjacency matrix of $G$, let $\lambda=\lambda(G)$, and let $\mathbf1$ be the all-one vector. If the counted graph is $P_t$, then every copy gives two injective walks of length $t-1$, whence
\[
        2\N(P_t,G)\le \mathbf1^TA_G^{t-1}\mathbf1
        \le \|\mathbf1\|^2\|A_G\|^{t-1}=n\lambda^{t-1}.
\]
By Lemma~\ref{lem:zls}, $\lambda\le\sqrt{e(G)}+\ell$. Therefore
\[
        \frac{n^t}{2^t}-Kn^{t-1}
        \le \frac n2(\sqrt{e(G)}+\ell)^{t-1}.
\]
Taking $(t-1)$st roots gives $\sqrt{e(G)}\ge n/2-O_{\ell,t,K}(1)$, and hence $e(G)\ge t_{n,2}-A n$ for a suitable $A$.

Now suppose the counted graph is $C_{2a}$. Every copy of $C_{2a}$ gives $4a$ closed walks of length $2a$, so
\[
        4a\N(C_{2a},G)\le \operatorname{tr} A_G^{2a}.
\]
If $\lambda_1,\ldots,\lambda_n$ are the eigenvalues of $A_G$, then
\[
        \operatorname{tr} A_G^{2a}=\sum_i\lambda_i^{2a}
        \le \lambda^{2a-2}\sum_i\lambda_i^2=2e(G)\lambda^{2a-2}.
\]
Using Lemma~\ref{lem:zls} again, we obtain
\[
        \frac{n^{2a}}{2^{2a-1}}-O_{a,K}(n^{2a-1})
        \le 2e(G)(\sqrt{e(G)}+\ell)^{2a-2}.
\]
If $e(G)<t_{n,2}-An$ with $A$ sufficiently large in terms of $\ell,a,K$, then the right-hand side is at most
\[
        \frac{n^{2a}}{2^{2a-1}}-\Omega(A)n^{2a-1},
\]
a contradiction. This proves the cycle part.
\end{proof}

\section{Proofs for paths}\label{sec:proofs-paths}

\begin{proof}[Proof of Theorem~\ref{thm:path-main}]
Let $G$ be an $n$-vertex $C_{2\ell+1}$-free graph with $\N(P_t,G)\ge b^{\mathrm P}_{t,r}(n)$. Since $\N(P_t,T_{n,2})=n^t/2^t+O(n^{t-1})$, Lemma~\ref{lem:path-extremal-asymp} implies $\N(P_t,G)\ge n^t/2^t-O(n^{t-1})$. Lemma~\ref{lem:spectral-init}(i) gives $e(G)\ge t_{n,2}-An$ for a constant $A=A(\ell,t,r)$.

We claim that
\begin{equation}\label{eq:path-edge-threshold}
        e(G)\ge \left\lfloor\frac{(n-r)^2}{4}\right\rfloor+\binom{r+1}{2}.
\end{equation}
Suppose not. Since both sides of \eqref{eq:path-edge-threshold} are integers,
\[
        e(G)\le \left\lfloor\frac{(n-r)^2}{4}\right\rfloor+\binom{r+1}{2}-1.
\]
Thus
\begin{equation}\label{eq:path-edge-gap-lower}
\begin{aligned}
        t_{n,2}-e(G)
        &\ge \left\lfloor\frac{n^2}{4}\right\rfloor
        -\left\lfloor\frac{(n-r)^2}{4}\right\rfloor
        -\binom{r+1}{2}+1\\
        &=\frac r2 n-O_r(1).
\end{aligned}
\end{equation}
Applying Corollary~\ref{cor:global-defects}(i) and \eqref{eq:path-edge-gap-lower}, we obtain
\begin{equation}\label{eq:path-count-upper-defect}
        \N(P_t,G)\le \N(P_t,T_{n,2})-
        \frac{\lfloor t/2\rfloor r}{2^{t-1}}n^{t-1}+O(n^{t-2}).
\end{equation}
By Lemma~\ref{lem:path-extremal-asymp},
\[
        b^{\mathrm P}_{t,r}(n)=\N(P_t,T_{n,2})-\frac{t(r-1)}{2^t}n^{t-1}+O(n^{t-2}).
\]
If $t$ is even, then $\lfloor t/2\rfloor r/2^{t-1}=tr/2^t>t(r-1)/2^t$; if $t$ is odd and $r\le t-1$, then $\lfloor t/2\rfloor r/2^{t-1}=(t-1)r/2^t>t(r-1)/2^t$. Hence \eqref{eq:path-count-upper-defect} contradicts $\N(P_t,G)\ge b^{\mathrm P}_{t,r}(n)$ for all sufficiently large $n$. This proves \eqref{eq:path-edge-threshold}.

By Theorem~\ref{thm:zlp} and \eqref{eq:path-edge-threshold}, either $G\in\cG_{n,r+1}$, or $G$ is obtained from $T_{n-r,2}$ by suspending $K_{r+1}$. In the second case, the bipartite core has $n-r$ vertices, and the copies of $P_t$ using at least one of the $r$ outside vertices contribute only $O(n^{t-2})$. Thus $\N(P_t,G)=\N(P_t,T_{n-r,2})+O(n^{t-2})$. By \eqref{eq:even-T-diff} and \eqref{eq:odd-T-diff}, this gives
\[
        \N(P_t,G)=\N(P_t,T_{n,2})-\frac{tr}{2^t}n^{t-1}+O(n^{t-2}),
\]
which is smaller than $b^{\mathrm P}_{t,r}(n)$ for all sufficiently large $n$. Thus the second case cannot occur, and $G\in\cG_{n,r+1}$.

If $G\notin\cG_{n,r}$, then $G\in\cG_{n,r+1}\setminus\cG_{n,r}$, and the assumed lower bound gives $G\in\cB^{\mathrm P}_{t,r}(n)$. Proposition~\ref{prop:path-extremal} gives $G\in\cS_{t,r}(n)$.
\end{proof}

Thus Corollary~\ref{cor:path-chromatic} follows directly from Theorem~\ref{thm:path-main}, except when $t$ is odd and $r\ge t$. In this case a bound depending only on $t_{n,2}-e(G)$ does not give the required contradiction. We use the following lemma, which also accounts for the vertices outside the bipartite core given by Lemma~\ref{lem:zlp-core}.

\begin{lemma}\label{lem:odd-two-param}
Fix $\ell\ge2$, an odd integer $t\ge5$ and a constant $A>0$. There is a constant $C=C(\ell,t,A)$ such that the following holds for all sufficiently large $n$. Let $G$ be an $n$-vertex $C_{2\ell+1}$-free graph with $e(G)\ge t_{n,2}-An$. Choose an integer $c=c(A)\ge2$ with $c>2A$, and let $W$ be the set given by Lemma~\ref{lem:zlp-core} with this value of $c$. Let $H=G[W]$, $s=|W|=n-\tau$, and $\rho=t_{s,2}-e(H)$.
Then
\[
        \N(P_t,G)\le \N(P_t,T_{n,2})
        -\frac{t\tau}{2^t}n^{t-1}
        -\frac{(t-1)\rho}{2^{t-1}}n^{t-2}
        +Cn^{t-2}.
\]
\end{lemma}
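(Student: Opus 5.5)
We split the copies of $P_t$ in $G$ according to how many vertices of $U=V(G)\setminus W$ they contain, with $|U|=\tau$. Since $e(G)\ge t_{n,2}-An$ and $c>2A$ is chosen so that $e(G)\ge (n-c)^2/4$, Lemma~\ref{lem:zlp-core} gives $\tau\le 2c=O_A(1)$. It also gives $d_W(u)<11c$ for every $u\in U$. We then write
\[
        \N(P_t,G)=\N(P_t,H)+N_1+N_{\ge2},
\]
where $N_1$ counts copies meeting $U$ in exactly one vertex and $N_{\ge2}$ counts the remaining copies. The point is that, unlike in Proposition~\ref{prop:lifting}, we must not absorb the vertices of $U$ into the error term. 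Each missing vertex costs a term of order $n^{t-1}$ through the comparison $\N(P_t,T_{n,2})-\N(P_t,T_{s,2})$, while each vertex of $U$ adds back only $O(n^{t-2})$ copies.

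\emph{The contribution of $U$.} A copy of $P_t$ containing a vertex $u\in U$ must use at least one edge from $u$. If $u$ is not an endpoint of the path, it must use two edges at $u$. Either such an edge goes to $W$, and there are $<11c$ choices, or it goes to $U$, and there are $O(1)$ choices. Once we fix $u$, one neighbour $w$, and the position of $u$ on the path, at most $t-2$ vertices of the path remain undetermined. More carefully, the path is then extended from the fixed vertices by walks in $G$. These contribute at most $O(n^{t-2})$ copies per vertex of $U$, and $\tau=O(1)$. Hence $N_1+N_{\ge2}=O_{\ell,t,A}(n^{t-2})$. For this it suffices to note that a path with one end fixed and one fixed next vertex has at most $n^{t-2}$ completions. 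So $\N(P_t,G)\le\N(P_t,H)+O(n^{t-2})$.

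\emph{Counting inside $H$.} We have $\rho=t_{s,2}-e(H)\le t_{n,2}-e(G)+O(1)=O_A(s)$, because $e(H)\ge e(G)-O(1)$ and $t_{s,2}\le t_{n,2}$. So Theorem~\ref{thm:local-counting} applies with $F=P_t$, where $t=2q+1$, $k=q$, $\nu=1$, $|\Aut(F)|=2$ and $\kappa_F=q/2^{2q-1}=(t-1)/2^{t-1}$. This gives
\[
        \N(P_t,H)\le \N(P_t,T_{s,2})-\frac{(t-1)\rho}{2^{t-1}}s^{t-2}+O(s^{t-2}).
\]
Since $\rho=O(n)$ and $s=n-O(1)$, we may replace $s^{t-2}$ by $n^{t-2}$ at cost $O(n^{t-2})$.

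\emph{The comparison step.} Finally, \eqref{eq:odd-T-diff} with $d=\tau$ gives
\[
        \N(P_t,T_{s,2})=\N(P_t,T_{n,2})-\frac{t\tau}{2^t}n^{t-1}+O(n^{t-2}).
\]
The error here is uniform over the boundedly many values $\tau\le 2c$. Adding the three estimates yields the claimed inequality.

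The only delicate point is the bookkeeping in this comparison step. We must use the exact leading coefficient $t\tau/2^t$ coming from \eqref{eq:odd-T-diff}, rather than the weaker $\kappa_F$-based bound \eqref{eq:lifting-Ts-bound}. For odd $t$, the direct comparison gives $t/2^t$ per missing vertex, which exceeds the $(t-1)/2^t$ that would follow from \eqref{eq:lifting-Ts-bound} with $t_{n,2}-t_{s,2}\approx \tau n/2$. We must also confirm that the bounded-degree property of $U$ genuinely makes its contribution $O(n^{t-2})$ and not $O(n^{t-1})$.
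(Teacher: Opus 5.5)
Your proposal is correct and follows the paper's proof essentially step for step. You discard the copies meeting $V(G)\setminus W$ at cost $O(n^{t-2})$ using the bounded degrees from Lemma~\ref{lem:zlp-core}, apply Theorem~\ref{thm:local-counting} to $H$ with $\kappa_{P_t}=(t-1)/2^{t-1}$, and use \eqref{eq:odd-T-diff} with $d=\tau$ for the exact coefficient $t\tau/2^t$. You also state two points the paper leaves implicit: the error in \eqref{eq:odd-T-diff} is uniform over the boundedly many values $\tau\le 2c$, and \eqref{eq:lifting-Ts-bound} would only give $(t-1)\tau/2^t$.
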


\begin{proof}
Since $c>2A$, we have $t_{n,2}-An\ge (n-c)^2/4$ for all sufficiently large $n$, so Lemma~\ref{lem:zlp-core} applies with this value of $c$.
The set $W$ has size $n-O(1)$ and every vertex outside $W$ has $O(1)$ neighbors in $W$. Hence there are only $O(1)$ edges from $V(G)\setminus W$ to $W$, and only $O(1)$ edges inside $V(G)\setminus W$. Therefore the number of copies of $P_t$ using at least one edge not contained in $H$ is $O(n^{t-2})$. Thus $\N(P_t,G)\le \N(P_t,H)+O(n^{t-2})$.
Since $\rho=t_{s,2}-e(H)=O_A(s)$, Theorem~\ref{thm:local-counting} applies to $H$ with $F=P_t$. Since $t$ is odd, \eqref{eq:kappa-F} gives $\kappa_F=(t-1)/2^{t-1}$. Using $s=n-\tau=n-O(1)$, we get
\begin{equation}\label{eq:odd-two-param-H-count}
        \N(P_t,H)
        \le \N(P_t,T_{s,2})-
        \frac{(t-1)\rho}{2^{t-1}}n^{t-2}+O(n^{t-2}).
\end{equation}
Finally, \eqref{eq:odd-T-diff} gives
\begin{equation}\label{eq:odd-two-param-core-count}
        \N(P_t,T_{s,2})
        =\N(P_t,T_{n,2})-
        \frac{t\tau}{2^t}n^{t-1}+O(n^{t-2}).
\end{equation}
Combining $\N(P_t,G)\le \N(P_t,H)+O(n^{t-2})$ with \eqref{eq:odd-two-param-H-count} and \eqref{eq:odd-two-param-core-count} proves the lemma.
\end{proof}

\begin{proof}[Proof of Corollary~\ref{cor:path-chromatic}]
Let $S\in\cS_{t,r}(n)$. By Proposition~\ref{prop:path-extremal}, $\N(P_t,S)=b^{\mathrm P}_{t,r}(n)=b^{\mathrm P,\chi}_{t,r}(n)$.
Moreover, each member $S'\in\cS_{t,r}(n)$ is $C_{2\ell+1}$-free and satisfies $\chi(S')=r$. We show that every $n$-vertex $C_{2\ell+1}$-free graph $G$ with $\chi(G)\ge r$ and $\N(P_t,G)\ge\N(P_t,S)$ belongs to $\cS_{t,r}(n)$.

If $t$ is even, or if $t$ is odd and $r\le t-1$, then Theorem~\ref{thm:path-main} gives $G\in\cG_{n,r}$ or $G\in\cS_{t,r}(n)$. Since every graph in $\cG_{n,r}$ is $(r-1)$-colourable, so the condition $\chi(G)\ge r$ excludes the first case. Hence $G\in\cS_{t,r}(n)$.

Now assume that $t$ is odd and $r\ge t$. By Lemma~\ref{lem:path-extremal-asymp},
\[
        b^{\mathrm P,\chi}_{t,r}(n)
        =\N(P_t,T_{n,2})-\frac{t(r-1)}{2^t}n^{t-1}+O(n^{t-2}).
\]
The lower bound $\N(P_t,G)\ge b^{\mathrm P,\chi}_{t,r}(n)$ and Lemma~\ref{lem:spectral-init}(i) imply $e(G)\ge t_{n,2}-An$ for some constant $A=A(\ell,t,r)$.
Choose $c=c(A)$ as in Lemma~\ref{lem:odd-two-param}, let $W$ be the set given by Lemma~\ref{lem:zlp-core}, let $H=G[W]$, $|W|=n-\tau$, and set $\rho=t_{n-\tau,2}-e(H)$. Since $H$ is bipartite and the $\tau$ vertices outside $W$ may be given distinct additional colours, $\chi(G)\le \tau+2$. Hence
\begin{equation}\label{eq:tau-lower}
        \tau\ge r-2.
\end{equation}

We next prove the edge threshold \eqref{eq:path-edge-threshold}. Suppose it fails. Then $t_{n,2}-e(G)\ge (r/2)n-O_r(1)$. Since only $O(1)$ edges of $G$ are not contained in $H$, we have
\[
        t_{n,2}-e(G)
        =t_{n,2}-t_{n-\tau,2}+\rho+O(1)
        =\frac{\tau}{2}n+\rho+O(1).
\]
Since $H$ is bipartite, $\rho=t_{n-\tau,2}-e(H)\ge0$. If $\tau\ge r$, Lemma~\ref{lem:odd-two-param} gives
\[
        \N(P_t,G)\le \N(P_t,T_{n,2})
        -\frac{tr}{2^t}n^{t-1}+O(n^{t-2}),
\]
contradicting the assumed lower bound for all sufficiently large $n$. Therefore $\tau\le r-1$, and the preceding two inequalities yield
\begin{equation}\label{eq:rho-lower}
        \rho\ge\frac{r-\tau}{2}n-O_r(1).
\end{equation}

Now Lemma~\ref{lem:odd-two-param} and \eqref{eq:rho-lower} give
\[
\begin{aligned}
        \N(P_t,G)
        &\le \N(P_t,T_{n,2})
        -\left(\frac{t\tau}{2^t}+\frac{(t-1)(r-\tau)}{2^t}\right)n^{t-1}
        +O(n^{t-2})\\
        &=\N(P_t,T_{n,2})-\frac{(t-1)r+\tau}{2^t}n^{t-1}+O(n^{t-2}).
\end{aligned}
\]
By \eqref{eq:tau-lower}, the coefficient of $n^{t-1}$ in the loss is at least $(tr-2)/2^t$, which is larger than $t(r-1)/2^t$ since $t\ge5$. This again contradicts $\N(P_t,G)\ge b^{\mathrm P,\chi}_{t,r}(n)$ for all sufficiently large $n$. Hence \eqref{eq:path-edge-threshold} holds.

By Theorem~\ref{thm:zlp} and \eqref{eq:path-edge-threshold}, either $G\in\cG_{n,r+1}$, or $G$ is obtained from $T_{n-r,2}$ by suspending $K_{r+1}$. In the second case, $\N(P_t,G)=\N(P_t,T_{n-r,2})+O(n^{t-2})$, and \eqref{eq:even-T-diff} and \eqref{eq:odd-T-diff} give
\[
        \N(P_t,G)=\N(P_t,T_{n,2})-\frac{tr}{2^t}n^{t-1}+O(n^{t-2}),
\]
which is smaller than $b^{\mathrm P,\chi}_{t,r}(n)$ for all sufficiently large $n$. Hence $G\in\cG_{n,r+1}$.

Since every graph in $\cG_{n,r}$ is $(r-1)$-colourable, so $\chi(G)\ge r$ gives $G\notin\cG_{n,r}$. Thus $G\in\cG_{n,r+1}\setminus\cG_{n,r}$ and $\chi(G)\ge r$. By the definition of $b^{\mathrm P,\chi}_{t,r}(n)$ and the assumed lower bound, $G\in\cB^{\mathrm P,\chi}_{t,r}(n)$. Proposition~\ref{prop:path-extremal} therefore gives $G\in\cS_{t,r}(n)$.
\end{proof}

\section{Proofs for even cycles}\label{sec:proofs-cycles}

\begin{proof}[Proof of Theorem~\ref{thm:cycle-main}]
Let $G$ be an $n$-vertex $C_{2\ell+1}$-free graph with $\N(C_{2a},G)\ge b^{\circ}_{a,r}(n)$. Since $\N(C_{2a},T_{n,2})=n^{2a}/(2a\,2^{2a})+O(n^{2a-1})$, Lemma~\ref{lem:cycle-extremal-asymp} implies $\N(C_{2a},G)\ge n^{2a}/(2a\,2^{2a})-O(n^{2a-1})$. Lemma~\ref{lem:spectral-init}(ii) gives $e(G)\ge t_{n,2}-An$ for a constant $A=A(\ell,a,r)$.

We claim that
\begin{equation}\label{eq:cycle-edge-threshold}
        e(G)\ge \left\lfloor\frac{(n-r)^2}{4}\right\rfloor+\binom{r+1}{2}.
\end{equation}
Suppose not. The same integer calculation as in \eqref{eq:path-edge-gap-lower} gives $t_{n,2}-e(G)\ge (r/2)n-O_r(1)$.
Applying Corollary~\ref{cor:global-defects}(ii), we obtain
\[
        \N(C_{2a},G)\le \N(C_{2a},T_{n,2})-\frac{r}{2^{2a}}n^{2a-1}+O(n^{2a-2}).
\]
By Lemma~\ref{lem:cycle-extremal-asymp},
\[
        b^{\circ}_{a,r}(n)=\N(C_{2a},T_{n,2})-\frac{r-1}{2^{2a}}n^{2a-1}+O(n^{2a-2}),
\]
which contradicts $\N(C_{2a},G)\ge b^{\circ}_{a,r}(n)$ for all sufficiently large $n$. This proves \eqref{eq:cycle-edge-threshold}.

By Theorem~\ref{thm:zlp} and \eqref{eq:cycle-edge-threshold}, either $G\in\cG_{n,r+1}$, or $G$ is obtained from $T_{n-r,2}$ by suspending $K_{r+1}$. In the second case, every copy of $C_{2a}$ is either in the bipartite core or in the clique $K_{r+1}$, and hence
\[
        \N(C_{2a},G)=\N(C_{2a},T_{n-r,2})+O(1)
        =\N(C_{2a},T_{n,2})-\frac r{2^{2a}}n^{2a-1}+O(n^{2a-2}),
\]
which is smaller than $b^{\circ}_{a,r}(n)$. Thus the exceptional case cannot occur, and $G\in\cG_{n,r+1}$.

If $G\notin\cG_{n,r}$, then $G\in\cG_{n,r+1}\setminus\cG_{n,r}$. By the definition of $b^{\circ}_{a,r}(n)$, we have $\N(C_{2a},G)\le b^{\circ}_{a,r}(n)$; together with the assumed reverse inequality this gives $G\in\cB^{\circ}_{a,r}(n)$. Proposition~\ref{prop:cycle-extremal} identifies this second alternative explicitly: $G\in\cR_r(n)$ if $r<2a$, and $G\in\cT^*(r,n)$ if $r\ge2a$.
\end{proof}

\begin{proof}[Proof of Corollary~\ref{cor:cycle-chromatic}]
Fix $T\in \cT^*(r,n)$. By Proposition~\ref{prop:cycle-extremal}, $\N(C_{2a},T)=b^{\circ}_{a,r}(n)=b^{\circ,\chi}_{a,r}(n)$, and every $T'\in\cT^*(r,n)$ is $C_{2\ell+1}$-free and satisfies $\chi(T')=r$.

Let $G$ be an $n$-vertex $C_{2\ell+1}$-free graph with $\chi(G)\ge r$ and $\N(C_{2a},G)\ge \N(C_{2a},T)$. Theorem~\ref{thm:cycle-main} gives $G\in\cG_{n,r}$ or $G\in\cB^{\circ}_{a,r}(n)$. Since every graph in $\cG_{n,r}$ is $(r-1)$-colourable, the first case cannot occur. Hence $G\in\cB^{\circ}_{a,r}(n)$. Together with $\chi(G)\ge r$ and $b^{\circ}_{a,r}(n)=b^{\circ,\chi}_{a,r}(n)$, this gives $G\in\cB^{\circ,\chi}_{a,r}(n)$. Proposition~\ref{prop:cycle-extremal} then gives $G\in \cT^*(r,n)$.
\end{proof}

\section{Concluding remarks}\label{sec:odd-remark}

The restriction $r\le t-1$ in Theorem~\ref{thm:path-main} is needed only in the case where $t$ is odd. It comes from the following comparison. In the proof, if the edge bound \eqref{eq:path-edge-threshold} fails, then Corollary~\ref{cor:global-defects}(i) gives
\[
        \N(P_t,G)\le \N(P_t,T_{n,2})
        -\frac{\lfloor t/2\rfloor r}{2^{t-1}}n^{t-1}
        +O(n^{t-2}).
\]
On the other hand, Lemma~\ref{lem:path-extremal-asymp} gives
\[
        b^{\mathrm P}_{t,r}(n)=\N(P_t,T_{n,2})
        -\frac{t(r-1)}{2^t}n^{t-1}
        +O(n^{t-2}).
\]
Thus this argument contradicts $\N(P_t,G)\ge b^{\mathrm P}_{t,r}(n)$ only when $\lfloor t/2\rfloor r/2^{t-1}>t(r-1)/2^t$. For odd $t$, this is equivalent to $r<t$, which explains the restriction $r\le t-1$.

\begin{problem}\label{prob:odd-path-no-chromatic}
Fix $\ell\ge2$, and let $t\ge5$ be odd and $t\le r\le2\ell-1$. Is it true that, for all sufficiently large $n$, every $n$-vertex $C_{2\ell+1}$-free graph $G$ with $\N(P_t,G)\ge b^{\mathrm P}_{t,r}(n)$ satisfies $G\in\cG_{n,r}$ or $G\in\cS_{t,r}(n)$?
\end{problem}

\end{document}